\documentclass[letterpaper, 10 pt, conference]{ieeeconf}

\IEEEoverridecommandlockouts
\usepackage{graphicx}
\usepackage{epsfig}
\usepackage{mathptmx}
\usepackage{times}
\usepackage{amsmath}
\usepackage{amssymb}

\let\labelindent\relax
\usepackage{enumitem}       
\usepackage{cite}
\allowdisplaybreaks

\usepackage{ntheorem}

\newif\ifshowhidden
 \showhiddenfalse   

\newcommand{\argmin}{\mathop{\rm argmin}}

\renewcommand{\top}{\textit{\footnotesize \texttt{T}}} 

\newtheorem{theorem}{Theorem}

\newtheorem{prop}[theorem]{Proposition}

\newtheorem{exmp}{Example}

\usepackage{silence}
\usepackage{amsfonts}
\usepackage{dsfont}
\newcommand{\mathbbm}[1]{\mathds{#1}}

\usepackage{float}
\usepackage[cal=cm, scr=boondox]{mathalpha}
\usepackage{xcolor}
\usepackage{thmtools,thm-restate}

\usepackage{mathtools}
\mathtoolsset{showonlyrefs}

\usepackage{caption}
\usepackage{subcaption}

\newcommand{\be}{\mathbf{e}}
\newcommand{\mbf}{\mathbf{f}}
\newcommand{\bv}{\mathbf{v}}
\newcommand{\bw}{\mathbf{w}}
\newcommand{\bz}{\mathbf{z}}

\newcommand{\R}{\mathbb{R}}
\newcommand{\E}{\mathbb{E}}
\newcommand{\N}{\mathbb{N}}
\renewcommand{\P}{\mathbb{P}}

\newcommand{\bA}{\mathbf{A}}
\newcommand{\bJ}{\mathbf{J}}
\newcommand{\bI}{\mathbf{I}}

\newcommand{\bP}{\mathbf{P}}

\newcommand{\cB}{\mathcal{B}}
\newcommand{\cC}{\mathcal{C}}
\newcommand{\cE}{\mathcal{E}}
\newcommand{\cF}{\mathcal{F}}
\newcommand{\cG}{\mathcal{G}}
\newcommand{\cJ}{\mathcal{J}}
\newcommand{\cK}{\mathcal{K}}
\newcommand{\cM}{\mathcal{M}}
\newcommand{\cN}{\mathcal{N}}
\newcommand{\cR}{\mathcal{R}}
\newcommand{\cS}{\mathcal{S}}
\newcommand{\cX}{\mathcal{X}}

\newcommand{\sA}{\mathscr{A}}
\newcommand{\sB}{\mathscr{B}}

\newcommand{\sI}{\mathscr{I}}
\newcommand{\sJ}{\mathscr{J}}
\newcommand{\sH}{\mathscr{H}}

\DeclareMathOperator*{\st}{s.t.}

\newtheorem{thm}{Theorem}
\newtheorem{lem}{Lemma}
\newtheorem{assump}{Assumption}

\ifdefined\corollary
\else
    \newtheorem{corollary}{Corollary}
\fi
\newtheorem{remark}{Remark}

\ifdefined\definition
\else
    \newtheorem{definition}{Definition}
\fi

\providecommand{\figurename}{Fig.~}
\providecommand{\assumpname}{Assumption~}
\providecommand{\lemname}{Lemma~}
\providecommand{\corollaryname}{Corollary~}

\providecommand{\thmname}{Theorem~}
\providecommand{\secname}{Section~}

\providecommand{\examplename}{Example~}

\title{
\LARGE \bf
Global Convergence of Policy Gradient Methods for \\ ReLU Controllers in Linear Quadratic Regulation 
}

\author{Jhojan A. Rodriguez-Gil and César A. Uribe
\thanks{Jhojan A. Rodriguez-Gil and César A. Uribe are with the Department of Electrical and Computer Engineering, Rice University, Houston, TX 77005 USA.
        {\tt\small \{jr200, cauribe\}@rice.edu}}%
\thanks{Extended version.}
}

\begin{document}

\maketitle
\thispagestyle{empty}
\pagestyle{empty}

\begin{abstract}
We study the convergence of model-based policy gradient for the deterministic, scalar, discounted linear-quadratic regulator when the controller is an overparameterized one-hidden-layer ReLU network without biases. Although the optimal LQR controller is linear, neural parameterization creates a redundant nonconvex weight space with a possibly asymmetric piecewise-linear controller. We show that this structure can still be analyzed exactly through the two effective gains induced on the positive and negative half-lines. Under suitable random initialization, sufficient width, and a small step size, the model-based policy gradient remains stable, decreases the cost geometrically, and drives the effective gains to the unique optimal scalar LQR gain with high probability.
\end{abstract}

\section{Introduction}

The linear quadratic regulator (LQR) is a canonical problem in optimal control because it admits a unique optimal linear state-feedback law and a closed-form solution through the Riccati equation \cite{kwakernaak1972linear,anderson1989optimal}. It is also a natural benchmark for reinforcement learning and policy optimization, since one can search directly over a parameterized controller instead of first solving a model-based dynamic program. For the exact class of linear controllers, this viewpoint is now well understood: policy-gradient methods converge globally to the optimal LQR gain under suitable assumptions, and broader analyses identify structural conditions under which policy-gradient objectives have no spurious stationary points or satisfy gradient-dominance properties \cite{fazel2018global,agarwal2021theory,bhandari2024global}. Notably, if the policy class is fixed to linear controllers, one can directly parameterize the feedback gain and apply policy-gradient methods that provably converge to the optimal LQR controller under suitable assumptions~\cite{fazel2018global}. Other results have also addressed the sample complexity of stochastic LQR~\cite{song2025sample}, and adaptive control methods with policy gradient~\cite{zhao2025policy}.

The situation is much less understood when the controller is parameterized by a neural network. General neural policy-gradient theory in overparameterized actor--critic settings provides global-optimality or stationary-point guarantees in broader Markov decision processes \cite{wang2020neuralpg}, but these results do not address the closed-loop stability issues that are central in control. In the LQR setting, moving beyond linear policies brings other complexities: extending policy search to piecewise-linear controllers can create local minima for standard policy gradient, while positive results in broader nonlinear policy classes typically rely on additional continuation or homotopy mechanisms rather than standard fixed-discount policy gradient \cite{chen2022homotopic}. The closest overparameterized LQR convergence result for neural-network parameterizations that we are aware of concerns linear feedforward networks, not ReLU policies~\cite{oliveira2025overparamlqr}.

In this paper, we show that a ReLU parameterization admits a controller-level analysis in deterministic, scalar, discounted LQR, and that sufficiently wide policy gradients still preserve stability and converge to the optimal linear LQR controller. This choice is natural because ReLU networks are expressive and piecewise linear \cite{yarotsky2017error,arora2018understanding}; in the scalar case, however, it imposes a very specific structure: the controller acts as one gain on $x \ge 0$ and another gain on $x < 0$. This places the problem outside the linear-controller setting studied in classical LQR policy-gradient analyses \cite{fazel2018global,agarwal2021theory,bhandari2024global}. Although the optimal LQR controller remains linear, the ReLU parameterization introduces a redundant, nonconvex weight space and a piecewise-linear controller with sign-dependent closed-loop dynamics. As a result, the main issue is not representability but optimization through a nonlinear parameterization. This is nontrivial for three reasons. First, policy gradient updates the neural weights rather than the effective controller gains directly. Second, the ReLU sign pattern couples the weight dynamics to closed-loop stability, so stability during training must be proved rather than assumed. Third, even though the target controller is linear, the search space includes asymmetric piecewise-linear controllers, and one must show that gradient descent drives the two induced gains to the same optimal value rather than converging to a suboptimal nonlinear policy. The main contributions of this paper are:
\begin{itemize}[leftmargin=2.0em,left=-1pt,itemsep=1pt, parsep=-1pt, topsep=-0pt, partopsep=-1pt]
    \item We characterize the bias-free scalar ReLU controller as a two-gain piecewise-linear policy and derive its controller-level stability and cost geometry. In the safe set, the cost-to-go is piecewise quadratic and can be analyzed directly in terms of the effective gains on the two half-lines.

    \item We show that under Gaussian initialization and sufficient width, the induced initial controller is stabilizing with high probability.

    \item Under a suitable overparameterized scaling and a sufficiently small step size, we prove that model-based policy gradient preserves stability for all iterates, yields geometric decay of the performance gap, and converges with high probability to the unique optimal linear LQR controller.
\end{itemize}

The rest of the paper is organized as follows. \secname \ref{sec_problem_formulation} introduces the control problem, the ReLU controller, and the main result. \secname \ref{sec_analysis} and \ref{sec_nn_landscape} present controller- and weight-level analyses. Then, \secname \ref{subsec:proof_main_result} proves convergence of the sequence of controllers. \secname \ref{sec_num_simulations} provides numerical simulations, and \secname \ref{sec_conclusions} concludes the paper.

\section{Problem Formulation and Main Result}\label{sec_problem_formulation}

Consider the deterministic scalar discrete-time system
\begin{equation}
    x_{t+1} = a x_t + b u_t, \quad t=0,1,2,\dots,
    \label{eq_dynamics}
\end{equation}
where $x_t \in \R$ is the state, $u_t \in \R$ is the input, and $a,b \in \R$ are fixed system parameters. Given an initial state $x_0 \sim \rho$, we study the discounted infinite-horizon regulation problem
\begin{equation}
\begin{array}{cl}
\underset{u_0,u_1,\ldots}{\min} &
\E_{x_0\sim\rho}\!\left[\sum_{t=0}^{\infty}\gamma^t\bigl(qx_t^2+ru_t^2\bigr)\right] \\[0.2em]
\text{s.t.} & x_{t+1}=ax_t+bu_t,\qquad t=0,1,2,\ldots
\end{array}
\label{eq_org_problem}
\end{equation}
with $q>0$, $r>0$, and $\gamma \in (0,1)$. We say that a feedback law $\mu: \R \to \R$ is stabilizing if the corresponding closed-loop trajectory satisfies $x_t \to 0$ for every initial condition. For any stabilizing policy $\mu$, the cost-to-go from state $x$ is
\begin{equation}
    J_{\mu}(x) = \sum_{t=0}^{\infty} \gamma^t \left( q x_t^2 + r \mu(x_t)^2 \right), \qquad x_0 = x.
    \label{eq_state_value_func}
\end{equation}
\vspace{-0.3cm}

Hence, the feedback-design problem can be written as
\begin{align}
        \underset{\mu \in \cF}{\min} \   \E_{x_0 \sim \rho} \left[ J_{\mu}(x_0) \right] \
        \st \ x_{t{+}1} {=} a x_t + b\mu(x_t),
    \label{eq_rl_problem}
\end{align}
where $\cF$ denotes the class of stabilizing deterministic state-feedback laws. For the scalar LQR problem, classical theory implies that the unique optimal policy is linear, namely $\mu^{\star}(x) = K^{\star} x$ for the optimal LQR gain $K^{\star}$ \cite{kwakernaak1972linear,anderson1989optimal}.

\begin{assump}\label{assump_controllability}
    System \eqref{eq_dynamics} is controllable, i.e., $b \neq 0$.
\end{assump}

\begin{assump}\label{assump_open_loop_margin}
    There exists $\delta_0 > 0$ such that $|a| \le 1 - \delta_0$.
\end{assump}

\begin{assump}\label{assump_initial_distribution}
There exists $\sigma_\rho > 0$ such that $\E\!\left[x_0^2\mathbbm{1}_{\{x_0\ge0\}}\right]
=
\E\!\left[x_0^2\mathbbm{1}_{\{x_0<0\}}\right]
\triangleq
{\sigma_\rho^2}/{2}$.
\end{assump}

\assumpname~\ref{assump_open_loop_margin} is stronger than stabilizability and is used later to justify random initialization: it guarantees that sufficiently small feedback gains remain stabilizing with a uniform margin, so the policy-gradient iterates can be started inside a safe set. Assumption~\ref{assump_initial_distribution} is exactly the moment condition used later to lower bound the positive- and negative-side occupancy weights.

We parameterize the controller by a bias-free one-hidden-layer ReLU network with width $m$,
\begin{equation}
    \mu_{\theta}(x) = \sum_{i=1}^{m} v_i \sigma(w_i x), \qquad \sigma(y)=\max\{0,y\},
    \label{eq_neural_cntr_def}
\end{equation}
where $\theta = \{w_1,\dots,w_m,v_1,\dots,v_m\} \in \R^{2m}$. Because the state is scalar and no biases are used, the network has a single breakpoint at the origin. Let $\sI=\{1,\dots,m\}$, $\sJ_1\coloneqq\{j\in\sI:w_j \geq 0\}$, and $\sJ_2\coloneqq\{j\in\sI:w_j<0\}$. Neurons with $w_j=0$ can be omitted. Then
\begin{equation}
    \mu_{\theta}(x) =
    \begin{cases}
        K_{1,\theta} x, & x \ge 0, \\
        K_{2,\theta} x, & x < 0,
    \end{cases}
    \label{eq_piecewise_controller}
\end{equation}
where $K_{1,\theta} \coloneqq \sum_{j \in \sJ_1} v_j w_j$, and $K_{2,\theta} \coloneqq \sum_{j \in \sJ_2} v_j w_j$.
Thus, in the scalar bias-free case, the ReLU controller is completely characterized by the two effective gains $(K_{1,\theta},K_{2,\theta})$. Moreover, when $m \ge 2$, this parameterization contains every linear controller $u=Kx$ by activating one neuron on each half-line; in particular, it contains the optimal LQR controller $(K^{\star},K^{\star})$.

\begin{exmp} \label{exm_bad_ini}
    Although the controller-level objective is naturally expressed in terms of the two gains $(K_{1,\theta},K_{2,\theta})$, its combination with the ReLU weights is redundant. This redundancy can create flat directions, local degeneracy, and poorly conditioned, plateau-like regions in weight space, so progress in one effective gain need not produce comparable progress in the other~\cite{chen2022homotopic,zhao2023symmetries,safran2021effects}. For example, $(K_1,K_2)=(1,2)$ is realized by $(w_1,w_2,v_1,v_2)=(1,-1,1,-2)$, and by infinitely many other choices such as $(a,-b,a^{-1},-2b^{-1})$ with $a,b>0$. If one sign group is missing at initialization, then one effective gain is locally absent; e.g., if $m=1$ and $w_1>0$, then locally $K_{2,\theta}=0$, so gradient updates can move only $K_{1,\theta}$ until the neuron crosses zero.
\end{exmp}

For any $\delta \in (0,1)$, define the stability-margin set as
\[
    \cK_{\delta} \coloneqq \left\{ \mu_{\theta} : |a+bK_{1,\theta}| \le 1-\delta,\; |a+bK_{2,\theta}| \le 1-\delta \right\}.
\]
We also define the safe set $ \cK_{\overline{\delta}}$ with $\overline{\delta} \triangleq \delta/2$. This set serves as the stability buffer in the analysis; later, we show that the relevant policy-gradient iterates remain within it.

Restricting attention to controllers in $\cK_{\overline{\delta}}$, the parameterized policy-search problem becomes
\begin{equation}
\begin{array}{cl}
\underset{\theta\in\R^{2m}}{\min} &
\cJ_\rho(\theta)\triangleq \cJ_\rho(\mu_\theta)
= \E_{x_0\sim\rho}[J_{\mu_\theta}(x_0)] \\[0.2em]
\text{s.t.} & x_{t+1}=ax_t+b\mu_\theta(x_t).
\end{array}
\label{eq_rl_theta_problem}
\end{equation}
Because the optimal linear controller is representable by the ReLU parameterization, the main issue is not expressivity but optimization: policy gradient operates in the redundant and nonconvex weight space $\theta$, rather than directly on the two controller gains. The next section analyzes this controller landscape and its relationship to the induced neural network optimization problem. 

 We are now ready to state the main result of this paper. 
\begin{thm}\label{thm:main_pg}
Consider Problem~\eqref{eq_rl_theta_problem} and let Assumptions~\ref{assump_controllability}--\ref{assump_initial_distribution} hold, fix $0<s<S<\infty$, $V>0$, $\beta>0$, $\iota>0$, $m\in\mathbb N$, and $\varepsilon \in (0,1)$ such that $(\beta,m)$ lies in the \underline{benign width regime} (c.f. Def.~\ref{def:benign}). Moreover, initialize the network independently as $w_{j,0} \sim \cN\!\left(0,{\beta}/{m}\right)$, and $v_{j,0} \sim \cN\!\left(0,{\beta^{-2}}/{m}\right)$, for $j=1,\dots,m$. Then, there exist constants $\{L_\mu,\alpha_\mu,C_0\}>0$ and $m_0\in\mathbb N$ such that, for $\iota<1/L_\mu$, $m\ge m_0$, and $\eta_m = \iota/m$, with probability at least \(1-e^{-C_0m}\), the iterates generated by the policy gradient 
\begin{equation}\label{eq_update_rule}
    \theta_{k+1}
    =
    \theta_k- \eta_m\nabla_\theta \cJ_{\rho}(\theta_k),
\end{equation}
 have the following properties: for every \(k\ge 0\), \(\mu_{\theta_k}\in \cK_{\overline\delta}\),
 \begin{align}
&\mathcal J_\rho(\mu_{\theta_k})-\mathcal J_\rho(\mu^\star)
\le
\rho_m^k \bigl(\mathcal J_\rho(\mu_{\theta_0})-\mathcal J_\rho(\mu^\star)\bigr), \ \ \ \text{and} \\
& \|\mu_{\theta_k}-\mu^\star\|_2^2
\le
\frac{\cJ_\rho(\mu_{\theta_0})-\cJ_\rho(\mu^\star)}{(r+\gamma b^2P^\star)(\sigma_\rho^2/2)}\,\rho_m^k, \ \ \  \text{where}\\
&\rho_m
=
1{-}\eta_m \alpha_{\mu}
(1{-}\varepsilon)\bigl(\Phi(S){-}\Phi(s)\bigr)\bigl(2\Phi(V\beta){-}1\bigr){s^2\beta}/{4},
 \end{align}
 and $\Phi$ is the standard Gaussian CDF.
\end{thm}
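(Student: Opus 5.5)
The argument runs at the controller level and then transfers to weight space. In the safe set $\cK_{\overline\delta}$ the cost-to-go is piecewise quadratic, so $\cJ_\rho(\mu_\theta)=F(K_{1,\theta},K_{2,\theta})$ for a smooth function $F$ of the two effective gains. Writing $g_i\coloneqq \partial F/\partial K_i$, I would use three controller-level facts.

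\textbf{(i) Smoothness.} On the closure of $\cK_{\overline\delta}$, $F$ is $L$-smooth in $(K_1,K_2)$.

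\textbf{(ii) Gradient dominance.} For $\mu\in\cK_{\overline\delta}$,
\[
\|\nabla F\|^2\ge \alpha\,(F-F^\star).
\]
This should follow from a performance-difference identity. The advantage of $K_i$ over $K^\star$ on each half-line is $(r+\gamma b^2P^\star)(K_i-K^\star)^2x^2$, weighted by the discounted occupancy mass on that half-line. Assumption~\ref{assump_initial_distribution} bounds each occupancy mass below by $\sigma_\rho^2/2$.

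\textbf{(iii) Quadratic growth.} The same identity gives
\[
F-F^\star\ge (r+\gamma b^2P^\star)\,\frac{\sigma_\rho^2}{2}\,\|\mu-\mu^\star\|_2^2 .
\]
This yields the second claimed bound immediately from the first.

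Next I translate to weights. By the chain rule, $\partial_{v_j}\cJ=g_{i(j)}w_j$ and $\partial_{w_j}\cJ=g_{i(j)}v_j$, where $i(j)$ is the sign group of neuron $j$. This holds as long as no $w_j$ crosses zero. Then
\[
\|\nabla_\theta\cJ\|^2=\sum_{i=1}^{2} g_i^2\,\lambda_i,\qquad \lambda_i\coloneqq\sum_{j\in\sJ_i}(w_j^2+v_j^2).
\]
Each $\lambda_i$ plays the role of an NTK eigenvalue for its side. To turn the controller-level Polyak--Łojasiewicz inequality into a weight-level one, I need $\lambda_i\gtrsim m\cdot$const uniformly along the trajectory. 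I plan to obtain this from a good set of neurons: those with $|w_j|\sqrt{m/\beta}\in[s,S]$ and $|v_j|\beta\sqrt m\le V$. A single neuron lands in this set, on a given sign side, with probability $\tfrac12(\Phi(S)-\Phi(s))(2\Phi(V\beta)-1)$. By Hoeffding, with probability at least $1-e^{-C_0m}$, each side contains at least $(1-\varepsilon)$ times this fraction of the $m$ neurons. Each good neuron contributes $w_j^2\ge s^2\beta/m$. This produces a lower bound $\lambda_i\ge (1-\varepsilon)(\Phi(S)-\Phi(s))(2\Phi(V\beta)-1)s^2\beta/4$ up to the $m$-scaling, which matches the factor in $\rho_m$. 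On the same event, the benign width regime (Def.~\ref{def:benign}) together with a concentration bound on $K_{i,\theta_0}=\sum_j v_jw_j$ (variance $O(1/(\beta m))$) gives $\mu_{\theta_0}\in\cK_\delta$. This is the earlier initialization result.

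The induction over $k$ would proceed as follows. Assume $\mu_{\theta_k}\in\cK_{\overline\delta}$ and that every good neuron has moved at most $o(1)\cdot|w_{j,0}|$. Because of the chain-rule structure, a descent lemma in $\theta$ holds with an effective smoothness constant $L_\mu m$: $L$ from (i) times $\max_i\lambda_i=O(m)$ on a bounded region. With $\eta_m=\iota/m$ and $\iota<1/L_\mu$,
\[
\cJ(\theta_{k+1})\le\cJ(\theta_k)-\tfrac{\eta_m}{2}\|\nabla_\theta\cJ\|^2 .
\]
Combining this with the $\lambda_i$ lower bound and (ii) gives the contraction by $\rho_m$. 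Since the cost decreases, the sublevel set containing $\mu_{\theta_0}$ lies inside $\cK_{\overline\delta}$, which preserves stability.

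\textbf{Main obstacle.} The hard step is keeping the neurons in place: I must show no good neuron changes sign or drifts out of $[s,S]$, so that $\lambda_i$ stays bounded below for all $k$. My first attempt is a lazy-training bound. Summing the per-neuron step lengths gives
\[
\sum_k \eta_m|g_i|\,\|(w_j,v_j)\|\lesssim \frac{\iota}{m}\sqrt{\cJ_0-\cJ^\star}\sum_k\rho_m^{k/2}.
\]
Since $1-\rho_m=\Theta(1/m)$, this total is $O(1/\sqrt m)$ relative to the scale of the weights. It is therefore absorbed by taking $m\ge m_0$. Gronwall-type control of the coupled $(w_j,v_j)$ growth would close the induction.
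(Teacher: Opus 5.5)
\paragraph{Gap 1: the drift estimate, and neurons that cross zero.} The step you flag as the main obstacle does not close, because the scaling is off.

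Under this initialization $\sum_j(w_{j,0}^2+v_{j,0}^2)\approx\beta+\beta^{-2}$. So the side masses $\lambda_i$ are $\Theta(1)$ in $m$, not $\Theta(m)$, and this is exactly why $1-\rho_m=\eta_m\alpha_\mu\lambda_\ast=\Theta(1/m)$. But then $\sum_k\rho_m^{k/2}\le 2/(1-\rho_m)=\Theta(m)$ cancels the $1/m$ in $\eta_m$. The relative drift of every neuron is therefore
\[
\eta_m\sum_{k}\|g_k\|_2\le\frac{2\sqrt{C_{\mathrm{grad}}\Delta_{\max}}}{\lambda_\ast\alpha_\mu}=B,
\]
which does not depend on $m$. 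Your total displacement is of the same order $1/\sqrt m$ as the weights themselves, so taking $m\ge m_0$ cannot absorb it.

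The smallness actually comes from $\lambda_\ast\propto\beta$, i.e.\ $B=O(\beta^{-1})$. It enters the proof only through the benign-regime hypotheses $e^BB<1$ and $e^BBR_c\le\tau/2$. Together with $r_{j,s}\le e^Br_{j,0}$ and $|w_{j,s}-w_{j,0}|\le e^BBr_{j,0}$ (Lemma~\ref{lem_xi_bound}), these keep the backbone neurons at $|w_{j,k}|\ge\tau/(2\sqrt m)$. That is the source of the factor $1/4$ in $\rho_m$.

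Because the relative drift is of order $B$ rather than vanishing, the neurons outside your good set with $|w_{j,0}|\lesssim e^BB\,r_{j,0}$ are not protected from crossing zero. These form a fraction of the $m$ neurons that does not shrink as $m$ grows. At a crossing, your chain-rule formula and your $\theta$-space descent lemma both fail, since $\theta\mapsto(K_{1,\theta},K_{2,\theta})$ is only piecewise smooth. Your proposal has no mechanism for these neurons.

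The paper handles them as follows:
\begin{itemize}
\item it runs the descent in controller space, via $\mu_{k+1}=\mu_k-\eta_mD_kg_k+\eta_m^2h_k+\be_k$ (Proposition~\ref{lem_approx_update});
\item it bounds $\|\be_k\|_2\le 2\sqrt2\,\eta_m\|g_k\|_2\,\Xi_k$;
\item it shows every crossing neuron lies in $\cS_w\cup\cS_v$;
\item it controls their mass on $\cE_\Xi$, so that $2\sqrt2\,\Xi_\ast\le\lambda_\ast/4$.
\end{itemize}

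\paragraph{Gap 2: the stability step via sublevel sets.} With $\gamma<1$, every denominator in Table~\ref{tab_p_values} is at least $1-\gamma$ whenever $|a+bK_i|\le 1$. So $\cJ_\rho$ has no barrier at the boundary of $\cK_{\overline\delta}$, and its sublevel sets need not lie inside it.

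Take the parameters of Section~\ref{sec_num_simulations} ($a=0.9$, $b=0.1$, $q=2$, $r=0.01$, $\gamma=0.98$). Then $\mu_{\theta_0}\approx(0,0)$ has $P_1=P_2\approx 9.7$. The controller $(K_1,K_2)=(-19,K^\star)$, for which $a+bK_1=-1$, has $P_1=q+rK_1^2+\gamma P^\star\approx 8.1$ and $P_2=P^\star\approx 2.6$. It has strictly lower cost and is reachable from $(0,0)$ inside the sublevel set, yet it lies outside $\cK_{\overline\delta}$ for every $\overline\delta>0$.

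Smoothness and the PL inequality are only available on $\cK_{\overline\delta}$, so membership must be proved before descent is used. The paper bounds displacement rather than cost. It shows $\|\mu_{k+1}-\mu_k\|_2\le\eta_mL_\ast\|g_k\|_2$, hence $\|\mu_{k+1}-\mu_0\|_2\le L_\ast B$. Then $\mu_{\theta_0}\in\cK_\delta$ together with $|b|L_\ast B\le\delta/2$ gives $\mu_{k+1}\in\cK_{\overline\delta}$.

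\paragraph{Gap 3: the justification of (ii).} Your argument for (ii) is really the quadratic-growth computation of your (iii). It bounds the gap from below, and combined with smoothness it only gives $\|g\|_2^2\le C_{\mathrm{grad}}\Delta$. Gradient dominance needs a separate argument. The paper obtains it from the Bhandari--Russo result, using closure of the two-gain class under greedy improvement (Proposition~\ref{cor_action_state_value_function}) and strong convexity of the weighted Bellman objective (Lemma~\ref{lem:Bstrongconvex}).
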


\thmname \ref{thm:main_pg} implies that overparametrization, together with the small step size regime, yields a landscape in which a sequence of stable controllers can be found that geometrically decrease the cost. The formal definition of the ``benign regime'' and the proof of Theorem~\ref{thm:main_pg} is deferred to Section~\ref{subsec:proof_main_result}.

\section{Policy Level Landscape} \label{sec_analysis}

We start studying the landscape generated by $\theta$ to understand if a solution for \eqref{eq_rl_theta_problem} is a solution for~\eqref{eq_rl_problem} as well. First, we study the landscape of problem \eqref{eq_rl_theta_problem} at the policy level, and then we focus on the neural network landscape. We start looking at the geometry generated by $J_{\mu_{\theta}}$ when we apply the control law \eqref{eq_piecewise_controller}.

\begin{lem}\label{lem_state_value_function}
   Let Assumptions~\ref{assump_controllability}--\ref{assump_initial_distribution} hold and consider Problem~\eqref{eq_rl_theta_problem}. If $\mu_{\theta} \in \cK_{\overline{\delta}}$, then, 
    \begin{equation} \label{eq_quad_cost}
        J_{\mu_{\theta}}(x_0) = \left\{
        \begin{array}{cc}
            P_{1, \theta} x_0^2, &  x_0 \geq 0, \\
            P_{2, \theta} x_0^2, &  x_0 < 0
        \end{array}
        \right.
    \end{equation}
     where $P_{1,\theta}, P_{2,\theta}$ are uniquely defined as the solution of $\bP(\mu) = (\bI - \gamma \bA(\mu) )^{-1} \mbf(\mu)$ (c.f. Table~\ref{tab_p_values}), where 
     \begin{gather}
        \bP(\mu_{\theta}) \triangleq
        \left[
            \begin{array}{c}
                P_{1,\theta}\\
                P_{2,\theta}
            \end{array}
        \right], \quad
        \mbf(\mu) \triangleq
        \left[
            \begin{array}{c}
                q + rK_{1,\theta}^2 \\
                q + rK_{2,\theta}^2
            \end{array}
        \right],
        \\
        \bA(\mu_{\theta}) \triangleq
        \left[
            \begin{array}{cc}
                a_{1,\theta}^2 \mathbbm{1}_{ \{ a_{1,\theta} \ge 0 \} } & a_{1,\theta}^2 \mathbbm{1}_{ \{ a_{1,\theta} < 0 \} } \\
                a_{2,\theta}^2 \mathbbm{1}_{ \{ a_{2,\theta} < 0 \} } & a_{2,\theta}^2 \mathbbm{1}_{ \{ a_{2,\theta} \geq 0 \} }
            \end{array}
        \right].
    \end{gather}
    Moreover, $\cJ_{\rho} \left( \mu_{\theta} \right) =  { \sigma_{\rho}^2 }/{ 2 } \left( P_{1, \theta} + P_{2, \theta} \right)$, $P_{1,\theta}, P_{2,\theta} \ge q$, and $\max(P_{1,\theta}, P_{2,\theta}) \le P_{\max} \triangleq \varphi_{\max} / (1 - \gamma(1 - \overline{\delta})^2)$ with  $K_{\max} \triangleq (|a| + 1 - \overline{\delta}) / |b|$ and $\varphi_{\max} \triangleq q + rK_{\max}^2$.
    
       \begin{table}[tb!]
        \centering
        \caption{$P_{1, \theta}, P_{2, \theta}$ depending on the signs of $a_{1,\theta}$ and $ a_{2,\theta}$.}
        \vspace{-0.1cm}
        \label{tab_p_values}
        \begin{tabular}{|c|c|c|}
            \hline
                Case  & $P_{1,\theta}$ & $P_{2,\theta}$ \\ \hline \hline
                \begin{minipage}{0.094\textwidth}
                    \centering
                    $0 \leq a_{1, \theta} < 1$, \\
                    $0 \leq a_{2, \theta} < 1$.
                \end{minipage}
                &
                $\frac{(q + rK_{1, \theta}^2)}{1 - \gamma a_{1, \theta}^2}$
                &
                $\frac{(q + rK_{2, \theta}^2)}{1 - \gamma a_{2, \theta}^2}$
            \\ \hline
                \begin{minipage}{0.094\textwidth}
                    \centering
                    $-1 < a_{1, \theta} < 0$, \\
                    $-1 < a_{2, \theta} < 0$.
                \end{minipage}
                &
                $\frac{q + r K_{1, \theta}^2 + \gamma (q a_{1, \theta}^2 + r K_{2, \theta}^2 a_{1, \theta}^2) }{1 - \gamma^2 a_{2, \theta}^2 a_{1, \theta}^2}$
                &
                $\frac{q + r K_{2, \theta}^2 + \gamma (q a_{2, \theta}^2 + r K_{1, \theta}^2 a_{2, \theta}^2)}{1 - \gamma^2 a_{1, \theta}^2 a_{2, \theta}^2}$
            \\ \hline
                \begin{minipage}{0.094\textwidth}
                    \centering
                    $0 \leq a_{1, \theta} < 1$, \\
                    $-1 < a_{2, \theta} < 0$.
                \end{minipage}
                &
                $\frac{(q + rK_{1, \theta}^2)}{1 - \gamma a_{1, \theta}^2}$
                &
                {\scriptsize $(q + rK_{2, \theta}^2)+ \frac{\gamma (q + r K_{1, \theta}^2) a_{2, \theta}^2}{1 - \gamma a_{1, \theta}^2}$}
            \\ \hline
                \begin{minipage}{0.094\textwidth}
                    \centering
                    $-1 < a_{1, \theta} < 0$, \\
                    $0 \leq a_{2, \theta} < 1$.
                \end{minipage}
                &
                {\scriptsize $(q + r K_{1, \theta}^2)+ \frac{\gamma(q + r K_{2,\theta}^2) a_{1, \theta}^2}{ 1 - \gamma a_{2, \theta}^2}.$}
                &
                $\frac{q + rK_{2, \theta}^2}{1 - \gamma a_{2, \theta}^2}$
            \\ \hline
        \end{tabular}
        \vspace{-0.2cm}
    \end{table}
\end{lem}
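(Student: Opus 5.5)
The plan is to exploit positive homogeneity of the closed loop. Write $a_{i,\theta}\triangleq a+bK_{i,\theta}$ for $i=1,2$. Under \eqref{eq_piecewise_controller}, a state $x\ge0$ is mapped to $a_{1,\theta}x$ and a state $x<0$ to $a_{2,\theta}x$. Hence the sign of $x_{t+1}$ is determined by the sign of $x_t$ and the sign of the active closed-loop coefficient.

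\textbf{Step 1 (quadratic form of the cost).} The trajectory satisfies $|x_{t+1}|\le(1-\overline\delta)|x_t|$ because $\mu_\theta\in\cK_{\overline\delta}$. The stage cost is bounded by $\varphi_{\max}x_t^2$, and this bound needs $|K_{i,\theta}|\le K_{\max}$, which follows from $|bK_{i,\theta}|\le|a|+|a_{i,\theta}|\le |a|+1-\overline\delta$. So the series \eqref{eq_state_value_func} converges absolutely and
\[
J_{\mu_\theta}(x)\le \sum_t \gamma^t\varphi_{\max}(1-\overline\delta)^{2t}x^2=P_{\max}x^2 .
\]
Next, scaling $x_0\mapsto cx_0$ with $c>0$ scales the whole trajectory by $c$, since $\mu_\theta$ is positively homogeneous. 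Therefore $J_{\mu_\theta}(cx)=c^2J_{\mu_\theta}(x)$. Defining $P_{1,\theta}\triangleq J_{\mu_\theta}(1)$ and $P_{2,\theta}\triangleq J_{\mu_\theta}(-1)$ gives \eqref{eq_quad_cost}, together with the upper bound $P_{\max}$.

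\textbf{Step 2 (Bellman system and uniqueness).} Apply the one-step Bellman recursion $J(x_0)=(q+r\mu_\theta(x_0)^2)+\gamma J(x_1)$ at $x_0=1$. The next state is $x_1=a_{1,\theta}$, so
\[
J(x_1)=a_{1,\theta}^2\bigl(P_{1,\theta}\mathbbm{1}_{\{a_{1,\theta}\ge0\}}+P_{2,\theta}\mathbbm{1}_{\{a_{1,\theta}<0\}}\bigr).
\]
This is the first row of $\bP=\mbf+\gamma\bA\bP$. The same computation at $x_0=-1$ gives the second row, with the indicators swapped because $x_1=-a_{2,\theta}$. Each row of $\gamma\bA$ has a single nonzero entry of size at most $\gamma(1-\overline\delta)^2<1$, so $\|\gamma\bA\|_\infty<1$. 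Hence $\bI-\gamma\bA$ is invertible and $\bP=(\bI-\gamma\bA)^{-1}\mbf$ is the unique solution.

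\textbf{Step 3 (table entries).} Solve the $2\times2$ system in each of the four sign cases. When both coefficients are nonnegative, $\bA$ is diagonal and each $P_i$ decouples. When both are negative, $\bA$ is anti-diagonal, and substituting one equation into the other gives denominators $1-\gamma^2a_{1,\theta}^2a_{2,\theta}^2$. In the mixed cases $\bA$ is triangular, and back-substitution yields the remaining entries of Table~\ref{tab_p_values}.

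\textbf{Step 4 (expected cost and lower bound).} The expectation follows from \eqref{eq_quad_cost} and Assumption~\ref{assump_initial_distribution}:
\[
\cJ_\rho=P_{1,\theta}\,\E[x_0^2\mathbbm{1}_{\{x_0\ge0\}}]+P_{2,\theta}\,\E[x_0^2\mathbbm{1}_{\{x_0<0\}}]=\tfrac{\sigma_\rho^2}{2}(P_{1,\theta}+P_{2,\theta}).
\]
The bound $P_{i,\theta}\ge q$ holds because the $t=0$ term of the cost from $x_0=\pm1$ is already at least $q$, and all other terms are nonnegative.

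The main obstacle is Step 1, namely justifying that the cost is finite and genuinely quadratic and keeping the constants uniform over $\cK_{\overline\delta}$. It is handled by the contraction factor $1-\overline\delta$. The case algebra in Step 3 is routine but tedious.
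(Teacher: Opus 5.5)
Your proof is correct. It reaches the result by a somewhat different route than the paper.

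The paper works at the trajectory level. It writes $x_t$ explicitly only in the case $a_{1,\theta},a_{2,\theta}\ge 0$ and sums the geometric series to get the first row of Table~\ref{tab_p_values}. It then asserts that the other three sign cases follow the same ideas. Finally, it uses $\|\gamma\bA(\mu_\theta)\|_\infty\le\gamma(1-\overline\delta)^2<1$ three times: for uniqueness via the Neumann series, for $P_{i,\theta}\ge q$ via nonnegativity of the entries of $(\gamma\bA)^t$, and for $\|\bP\|_\infty\le P_{\max}$ via the norm bound.

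You instead get the quadratic form from positive homogeneity of the closed loop and set $P_{1,\theta}=J_{\mu_\theta}(1)$, $P_{2,\theta}=J_{\mu_\theta}(-1)$. You then derive the fixed-point system $\bP=\mbf+\gamma\bA\bP$ from the one-step Bellman identity at $x_0=\pm1$. This buys three things:
\begin{itemize}
\item It treats all four sign patterns at once, including the sign-alternating case that the paper never writes out.
\item It derives the matrix identity that the paper only states.
\item It reduces Table~\ref{tab_p_values} to solving diagonal, anti-diagonal, or triangular $2\times2$ systems. Their solutions do match every entry of the table.
\end{itemize}

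Your bounds are trajectory-level rather than matrix-level: the $t=0$ stage cost is at least $q$, and the contraction $|x_{t+1}|\le(1-\overline\delta)|x_t|$ together with $|K_{i,\theta}|\le K_{\max}$ gives the upper bound. They produce exactly the same constants.

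The paper's Neumann-series viewpoint has one practical advantage: it directly supplies $\|(\bI-\gamma\bA)^{-1}\|_\infty\le 1/(1-\gamma(1-\overline\delta)^2)$, which is reused in Lemma~\ref{lem:Jrho_geometry}(iv). Your invertibility step in Step~2 already contains this estimate, so nothing is lost.
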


\begin{proof}
    Define $a_{1, \theta} \triangleq a + b K_{1, \theta}$ and $a_{2, \theta} \triangleq a + b K_{2, \theta}$. Consequently, we can define four cases depending on the signs of $a_1$ and $a_2$. Due to space constraints, we analyze a single case; the other cases follow the same ideas. Consider the case where $0 \leq a_{1, \theta} < 1$ and $0 \leq a_{2, \theta} < 1$, thus the state sign remains unchanged. It means that for a given $x_0$
    \begin{equation}
        x_t = \left\{
        \begin{array}{cc}
            a_{1, \theta}^t x_0, &  x_0 \geq 0, \\
            a_{2, \theta}^t x_0, &  x_0 < 0.
        \end{array}
        \right.
    \end{equation}
    For $x_0 \geq 0$, the state value function is
    \begin{equation}
        \begin{aligned}
            J_{\mu_{\theta}}(x_0) & {=} \sum_{t=0}^{\infty} \gamma^t \left( q x_t^2 + r\mu_{\theta}(x_t)^2 \right) = \sum_{t=0}^{\infty} \gamma^t \left( q + rK_{1, \theta}^2 \right)a_{1, \theta}^{2t} x_0^2, \\
            J_{\mu_{\theta}}(x_0) & {=} (q {+} rK_{1, \theta}^2) x_0^2 \sum_{t=0}^{\infty} \gamma^t a_{1, \theta}^{2t} = ({(q {+} rK_{1, \theta}^2)}/{(1 {-} \gamma a_{1, \theta}^2)}) x_0^2.
        \end{aligned}
    \end{equation}
    Define $P_{1, \theta} \triangleq (q + rK_{1, \theta}^2)/(1 - \gamma a_{1, \theta}^2)$ then,
    $ J_{\mu_{\theta}}(x_0) = P_{1, \theta} x_0^2 $. Using a similar procedure for $x < 0$, one obtains $ J_{\mu_{\theta}}(x_0) = P_{2, \theta} x_0^2$, where $P_{2, \theta} \triangleq (q + rK_{2, \theta}^2)/(1 - \gamma a_{2, \theta}^2)$, and the desired result follows. 
    
    Uniqueness follows as $\mu_{\theta} \in \cK_{\overline{\delta}}$ and each row of $\bA(\mu_{\theta})$ has only one nonzero entry, then $\| \bA(\mu_{\theta}) \|_{\infty} \le (1 - \overline{\delta})^2 $. It implies $\| \gamma \bA(\mu_{\theta}) \|_{\infty} \le \gamma (1 - \overline{\delta})^2 < 1$. Besides, $(I - \gamma \bA(\mu_{\theta}))^{-1} = \sum_{t=0}^{\infty} (\gamma \bA(\mu_{\theta}))^t$.

If we rewrite the cost function using indicator functions such that $\cJ_{\rho} \left( \mu_{\theta} \right) = \E \left[ P_{1, \theta} x_0^2 \mathbbm{1}_{ \{ x_0 \ge 0 \} } + P_{2, \theta} x_0^2 \mathbbm{1}_{ \{ x_0 < 0 \} } \right]$; from Assumption~\ref{assump_initial_distribution}, it follows that  $ \cJ_{\rho} \left( \mu_{\theta} \right) = (\sigma_{\rho}^2/2 ) \left( P_{1, \theta} + P_{2, \theta} \right)$.

    Finally, since $\mbf(\mu_\theta)\ge q\mathbf{1}$ componentwise and all entries of $(\gamma\bA(\mu_\theta))^t$ are nonnegative, $q$ is a lower bound for $P_{1,\theta}, P_{2,\theta}$. For the upper bound, since $\| \gamma \bA (\mu) \|_{\infty} \le \gamma (1 - \overline{\delta})^2$, one obtains that $\| \bP (\mu) \|_{\infty} \le \sum_{t=0}^{\infty} \gamma^t (1 - \overline{\delta})^{2t} \varphi_{\max} = \varphi_{\max} / (1 - \gamma (1 -\overline{\delta})^2)$. This concludes the proof.
\end{proof} 

By the Bellman equation~\cite{sutton1998reinforcement, bhandari2024global},
\[
J_{\mu_\theta}(x)= qx^2+r\mu_\theta(x)^2+\gamma J_{\mu_\theta}(ax+b\mu_\theta(x)).
\]

Next, we show that the controller class is closed under greedy improvement and that the weighted Bellman objective is strongly convex in controller space.

\begin{prop}\label{cor_action_state_value_function}
     Let Assumptions~\ref{assump_controllability}--\ref{assump_initial_distribution} hold, $\mu_\theta \in \cK_{\overline{\delta}}$, and consider Problem~\eqref{eq_rl_theta_problem}. Then, the state-action value function associated with
$\mu_\theta$ is
    \begin{equation} \label{eq_state_action_value_func}
    Q_{\mu_\theta}(x,u) \triangleq q x^2 + r u^2 + \gamma J_{\mu_\theta}(a x + b u)
\end{equation}
Moreover, $Q_{\mu_{\theta}}(x, u)$ is piecewise quadratic in $(x, u)$. Furthermore, $Q_{\mu_{\theta}} \in C^{1}(\mathbb R^2)$ and for each fixed $x\in\mathbb R$ is globally $2r$-strongly convex with respect to $u$. Hence, the greedy improvement policy $\mu'(x) \triangleq \argmin_u  \ Q_{\mu_{\theta}} (x, u)$ is uniquely defined and piecewise linear.
\end{prop}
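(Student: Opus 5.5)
We prove Proposition~\ref{cor_action_state_value_function} directly from the closed form in Lemma~\ref{lem_state_value_function}. The formula for $Q_{\mu_\theta}$ is just the definition of the state-action value: apply $u$ once at $x$, then follow $\mu_\theta$ from $ax+bu$. Since $\mu_\theta\in\cK_{\overline\delta}$, the Bellman identity stated above holds, so $J_{\mu_\theta}(x)=Q_{\mu_\theta}(x,\mu_\theta(x))$, which is consistent with this definition.

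By Lemma~\ref{lem_state_value_function}, $J_{\mu_\theta}(y)=P_{1,\theta}y^2$ for $y\ge0$ and $J_{\mu_\theta}(y)=P_{2,\theta}y^2$ for $y<0$. Hence
\[
Q_{\mu_\theta}(x,u)=qx^2+ru^2+\gamma P_{i,\theta}(ax+bu)^2 ,
\]
with $i=1$ on the half-plane $\{ax+bu\ge0\}$ and $i=2$ on $\{ax+bu<0\}$. The two half-planes are separated by the line $ax+bu=0$, which is a genuine line because $b\ne0$ by Assumption~\ref{assump_controllability}. This gives the piecewise-quadratic structure.

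For $C^1$ regularity, it suffices that $g(y)\triangleq J_{\mu_\theta}(y)$ is $C^1$ on $\R$. Away from $0$ it is a quadratic. At $0$ both pieces vanish together with their derivatives $2P_{i,\theta}y$, so $g\in C^1(\R)$ with $g'(0)=0$. Composing with the linear map $(x,u)\mapsto ax+bu$ and adding the smooth terms $qx^2+ru^2$ gives $Q_{\mu_\theta}\in C^1(\R^2)$.

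For strong convexity in $u$, fix $x$ and write $Q(x,u)-ru^2=qx^2+\gamma g(ax+bu)$. The function $g$ is convex because it is a $C^1$ function with nondecreasing derivative: $g'(y)=2P_{1,\theta}y$ for $y\ge0$ and $g'(y)=2P_{2,\theta}y$ for $y<0$, with $P_{i,\theta}\ge q>0$ by Lemma~\ref{lem_state_value_function}. So $g'$ is nonpositive on $y<0$, nonnegative on $y\ge0$, and increasing on each piece. Composing a convex function with an affine map preserves convexity, so $u\mapsto ru^2+(\text{convex})$ is $2r$-strongly convex on all of $\R$. This is the step that needs care: we cannot argue from piecewise second derivatives alone at the kink, so we argue through monotonicity of the first derivative across the kink.

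Strong convexity and continuity give a unique minimizer $\mu'(x)$. Its first-order condition is
\[
2ru+2\gamma bP_{i,\theta}(ax+bu)=0 .
\]
Solving on each candidate region gives $u=-\gamma abP_{i,\theta}x/(r+\gamma b^2P_{i,\theta})$. The closed loop is then $ax+bu=ar\,x/(r+\gamma b^2P_{i,\theta})$, whose sign is $\mathrm{sign}(a)\,\mathrm{sign}(x)$ (or zero if $a=0$). The correct piece is therefore determined by the sign of $x$ alone, and this choice is self-consistent by uniqueness. For $x\ge0$ we select $i=1$ if $a\ge0$ and $i=2$ if $a<0$, and symmetrically for $x<0$. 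Consequently $\mu'(x)=K_1'x$ for $x\ge0$ and $\mu'(x)=K_2'x$ for $x<0$, with explicit gains $K_j'$, which is piecewise linear with its breakpoint at the origin.
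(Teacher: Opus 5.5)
Your proof is correct and follows the paper's route: the closed form from Lemma~\ref{lem_state_value_function}, then $C^1$ regularity and convexity of the one-dimensional profile composed with $ax+bu$, then the first-order condition on each half-plane. The main difference is minor: you obtain convexity from monotonicity of $g'$, whereas the paper writes $J_{\mu_\theta}(y)=P_{1,\theta}y_+^2+P_{2,\theta}y_-^2$ as a nonnegative combination of convex $C^1$ functions. Your computation $ax+bu=arx/(r+\gamma b^2P_{i,\theta})$ is a genuine addition: it identifies the active piece through the sign of $ax$ and shows the breakpoint sits at $x=0$, whereas the paper only lists the two candidate formulas without checking which one is self-consistent for a given $x$.
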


\begin{proof}
By Lemma~\ref{lem_state_value_function}
\[
Q_{\mu_\theta}(x,u)
=
q x^2 + r u^2
+ \gamma P_{1,\theta}(ax+bu)_+^2
+ \gamma P_{2,\theta}(ax+bu)_-^2.
\]
Since $z\mapsto z_+^2$ and $z\mapsto z_-^2$ are piecewise quadratic and belong to $\cC^1(\R)$, it follows that $Q_{\mu_\theta}$ is piecewise quadratic in $(x,u)$ and belongs to $\cC^1(\R^2)$. Now fix $x\in\R$. The maps $u\mapsto (ax+bu)_+^2$, and $u\mapsto (ax+bu)_-^2$ are convex, because they are convex functions composed with an affine map. Hence $u\mapsto Q_{\mu_\theta}(x,u)$ is the sum of the $2r$-strongly convex term $ru^2$ and two convex terms, so it is globally $2r$-strongly convex. In particular, it has a unique minimizer.

On the region $\{ax+bu\ge 0\}$, the first-order condition is $2ru+2\gamma bP_{1,\theta}(ax+bu)=0$, which gives $u=-\frac{\gamma a b P_{1,\theta}}{r+\gamma b^2P_{1,\theta}}\,x$. On the region $\{ax+bu<0\}$, the same calculation gives $u=-\frac{\gamma a b P_{2,\theta}}{r+\gamma b^2P_{2,\theta}}\,x$. Therefore, the greedy improvement policy is uniquely defined and piecewise linear. Note that the greedy improvement policy remains in the two-gain class with one breakpoint at the origin, proving closure under policy improvement. This concludes the proof.
\end{proof}

Now we introduce the weighted policy iteration cost, as in~\cite{bhandari2024global}, which will serve as the main tool to establish the global convergence properties of the proposed controller:
\begin{equation}
    \begin{split}
        \cB \left( \theta' \mid \eta_{\mu_{\theta}}, J_{\mu_{\theta}} \right)
        & \triangleq  \E_{\eta_{\mu_{\theta}}} \left[ Q_{\mu_{\theta}} \left( x, \mu_{\theta'} (x) \right) \right],
    \end{split}
    \label{eq_wpi_cost}
\end{equation}
where $\eta_{\mu_{\theta}} \left( \cE \right)
=
\left( 1 - \gamma \right) \sum_{t = 0}^{\infty} \gamma^t \P_{\rho,\mu_{\theta}} \left(x_t \in \cE \right),
\ \cE \subset \cX$, is the discounted state-occupancy measure. By Assumption~\ref{assump_initial_distribution},
$c_1 = \E_{\eta_{\mu_{\theta}}} \left[ x^2 \mathbbm{1}_{\{ x \ge 0 \}} \right]
{\geq} (1{-}\gamma){\sigma_\rho^2}/{2}$, and $c_2 = \E_{\eta_{\mu_{\theta}}} \left[ x^2 \mathbbm{1}_{\{ x < 0 \}} \right]
{\geq} (1{-}\gamma){\sigma_\rho^2}/{2}$. Therefore, using Proposition~\ref{cor_action_state_value_function}, on each region determined by the signs of
$a+bK_{1,\theta'}$ and $a+bK_{2,\theta'}$, we can write
\begin{align}
    &\cB \bigl(\theta' \mid \eta_{\mu_\theta}, J_{\mu_\theta}\bigr) 
    =
    \left[
        q + r K_{1,\theta'}^2 + \gamma P_{i,\theta} \left( a + b K_{1,\theta'} \right)^2
    \right]
    c_1
    \nonumber\\
    &\quad +
    \left[
        q + r K_{2,\theta'}^2 + \gamma P_{j,\theta} \left( a + b K_{2,\theta'} \right)^2
    \right]
    c_2,
\label{eq_wpi_piecewise}
\end{align}
where $P_{i,\theta}, P_{j,\theta}$ are fixed on each region.

\begin{lem}\label{lem:Bstrongconvex}
Fix $\theta$ such that $\mu_\theta\in\cK_{\overline\delta}$. Then the mapping $
(K_{1,\theta'},K_{2,\theta'})\mapsto
\cB(\theta' \mid \eta_{\mu_\theta}, J_{\mu_\theta})$
is globally strongly convex on $\mathbb R^2$ with modulus $
\widehat\alpha\triangleq r(1-\gamma)\sigma_\rho^2$. Moreover, on each region determined by the signs of $a+bK_{1,\theta'}$ and $a+bK_{2,\theta'}$, the Hessian exists and satisfies
\[
\nabla^2_{(K_{1,\theta'},K_{2,\theta'})}\cB
=
\begin{bmatrix}
2(r+\gamma P_{i,\theta}b^2)c_1 & 0\\
0 & 2(r+\gamma P_{j,\theta}b^2)c_2
\end{bmatrix}
\succeq \widehat\alpha I.
\]
\end{lem}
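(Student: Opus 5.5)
The argument works directly with the representation of $\cB$ coming from Proposition~\ref{cor_action_state_value_function}, not with the region-wise formula \eqref{eq_wpi_piecewise}. Write $K_1=K_{1,\theta'}$ and $K_2=K_{2,\theta'}$. Since $\mu_{\theta'}(x)=K_1x$ for $x\ge0$ and $\mu_{\theta'}(x)=K_2x$ for $x<0$, splitting the expectation over $\eta_{\mu_\theta}$ into the two half-lines gives
\[
\cB(\theta'\mid\eta_{\mu_\theta},J_{\mu_\theta})=g_1(K_1)+g_2(K_2).
\]
Here $g_1(K)=\E_{\eta_{\mu_\theta}}\!\bigl[Q_{\mu_\theta}(x,Kx)\mathbbm{1}_{\{x\ge0\}}\bigr]$, and $g_2$ is defined in the same way with $\mathbbm{1}_{\{x<0\}}$. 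Substituting $Q_{\mu_\theta}(x,u)=qx^2+ru^2+\gamma P_{1,\theta}(ax+bu)_+^2+\gamma P_{2,\theta}(ax+bu)_-^2$ and using $x\ge0$, we get $(ax+bKx)_\pm^2=x^2(a+bK)_\pm^2$. Hence
\[
g_1(K)=c_1\bigl[q+rK^2+\gamma P_{1,\theta}(a+bK)_+^2+\gamma P_{2,\theta}(a+bK)_-^2\bigr].
\]
For $x<0$, the positive and negative parts swap, and
\[
g_2(K)=c_2\bigl[q+rK^2+\gamma P_{1,\theta}(a+bK)_-^2+\gamma P_{2,\theta}(a+bK)_+^2\bigr].
\]
All of these expectations are finite because $\mu_\theta\in\cK_{\overline\delta}$ makes the trajectory second moments decay geometrically, so $c_1,c_2<\infty$.

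\textbf{Strong convexity.} The functions $K\mapsto(a+bK)_\pm^2$ are convex, being convex functions composed with an affine map, and their coefficients $\gamma P_{i,\theta}c_\ell$ are nonnegative since $P_{i,\theta}\ge q>0$ by Lemma~\ref{lem_state_value_function}. Therefore $g_\ell(K)-rc_\ell K^2$ is convex, i.e. $g_\ell$ is $2rc_\ell$-strongly convex in $K$. A separable sum of two strongly convex functions is strongly convex on $\R^2$ with modulus equal to the smaller of the two moduli, namely $\min(2rc_1,2rc_2)$. The occupancy bounds stated just before the lemma give $c_\ell\ge(1-\gamma)\sigma_\rho^2/2$, so $2rc_\ell\ge r(1-\gamma)\sigma_\rho^2=\widehat\alpha$. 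This holds globally on $\R^2$. The functions $g_\ell$ are also $C^1$, since $z\mapsto z_\pm^2$ is $C^1$, although that is not needed for the strong convexity claim.

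\textbf{Hessian.} On the open region where $a+bK_1$ and $a+bK_2$ have fixed nonzero signs, exactly one of $(a+bK_\ell)_\pm^2$ is active in each $g_\ell$, and it equals $(a+bK_\ell)^2$. The function $g_\ell$ is then the quadratic $c_\ell[q+rK_\ell^2+\gamma P(a+bK_\ell)^2]$, where $P$ is the appropriate $P_{i,\theta}$ or $P_{j,\theta}$, and this recovers \eqref{eq_wpi_piecewise}. Differentiating twice gives $2c_\ell(r+\gamma Pb^2)$. The cross derivative vanishes by separability, which yields the stated diagonal Hessian. Each diagonal entry is at least $2rc_\ell\ge\widehat\alpha$ because $\gamma Pb^2\ge0$.

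\textbf{Main obstacle.} The only delicate point is to justify strong convexity \emph{across} region boundaries, where the Hessian does not exist. A region-wise Hessian bound alone does not establish global convexity. This is why the argument uses the single global formula with the $C^1$ convex terms $(\cdot)_\pm^2$ instead of patching the region-wise quadratics together. The sign bookkeeping of which $P$ multiplies which part on each half-line is routine.
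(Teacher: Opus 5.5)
Your proof is correct and follows essentially the paper's argument: the paper also gets strong convexity from the pointwise $2rx^2$-strong convexity of $K\mapsto Q_{\mu_\theta}(x,Kx)$ (Proposition~\ref{cor_action_state_value_function}), integrates over each half-line to obtain moduli $2rc_1$ and $2rc_2$, uses separability and the occupancy lower bounds $c_\ell\ge(1-\gamma)\sigma_\rho^2/2$, and then differentiates the region-wise quadratic \eqref{eq_wpi_piecewise} to get the Hessian. Writing $g_1,g_2$ out explicitly with the $(\cdot)_\pm^2$ terms is only a cosmetic variation of that route, and restricting the Hessian claim to the open regions (where $a+bK_\ell\neq 0$) is slightly more careful than the paper's statement, since the second derivative jumps across the boundary when $P_{1,\theta}\neq P_{2,\theta}$.
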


\begin{proof}
For each fixed $x\ge0$, from Proposition~\ref{cor_action_state_value_function} it follows that 
$K_1'\mapsto Q_{\mu_\theta}(x,K_1'x)$ is $2r x^2$-strongly convex. Integrating over
$\{x\ge0\}$ with respect to $\eta_{\mu_\theta}$ yields strong convexity in $K_1'$ with modulus
$2r c_1$. The same argument on $\{x<0\}$ yields strong convexity in $K_2'$ with modulus
$2r c_2$. Since the objective is separable in $(K_1',K_2')$, it is globally strongly convex on $\R^2$ with modulus $2r\min\{c_1,c_2\} \ge r(1-\gamma)\sigma_\rho^2 = \widehat\alpha$. On each region determined by the signs of $a+bK_{1,\theta'}$ and $a+bK_{2,\theta'}$, Eq.~\eqref{eq_wpi_piecewise} is quadratic, so differentiating it gives the displayed Hessian.
\end{proof}

\begin{lem}\label{lem:Jrho_geometry}
Let Assumptions~\ref{assump_controllability}--\ref{assump_initial_distribution} hold, let
$\mu=(K_1,K_2)\in\cK_{\overline\delta}$, and set $\mu^\star\triangleq (K^\star,K^\star)$, where $P^\star>0$ is the corresponding Riccati coefficient.
Then:

(i) (Smoothness) There exists $L_\mu<\infty$ such that
\[
\|\nabla_\mu \cJ_\rho(\mu_1)-\nabla_\mu \cJ_\rho(\mu_2)\|_2
\le
L_\mu\|\mu_1-\mu_2\|_2,
\quad
\forall \mu_1,\mu_2\in\cK_{\overline\delta}.
\]

(ii) $\cJ_\rho$ admits a gap lower bound
\begin{align*}
\cJ_\rho(\mu)-\cJ_\rho(\mu^\star)
&\ge
(r+\gamma b^2P^\star)({\sigma_\rho^2}/{2})\|\mu-\mu^\star\|_2^2.
\end{align*}

(iii) (PL-Inequality) There exists $\alpha_\mu>0$ such that
\[
\frac12\|\nabla_\mu \cJ_\rho(\mu)\|_2^2
\ge
\alpha_\mu\big(\cJ_\rho(\mu)-\cJ_\rho(\mu^\star)\big),
\qquad
\forall \mu\in\cK_{\overline\delta}.
\]
Moreover, letting $C_{\mathrm{grad}} \triangleq {L_\mu^2}/{((r+\gamma b^2P^\star)(\sigma_\rho^2/2))}$, we have
\[
\|\nabla_\mu \cJ_\rho(\mu)\|_2^2
\le
C_{\mathrm{grad}}\big(\cJ_\rho(\mu)-\cJ_\rho(\mu^\star)\big).
\]
Consequently, if $\mu_{k+1}=\mu_k-\iota \nabla_\mu \cJ_\rho(\mu_k)$ and $\{\mu_k\}\subset\cK_{\overline\delta}$ with
$0<\iota\le 1/L_\mu$, then for all $k\ge0$,
\[
\cJ_\rho(\mu_k)-\cJ_\rho(\mu^\star)
\le
(1-\iota\alpha_\mu)^k\big(\cJ_\rho(\mu_0)-\cJ_\rho(\mu^\star)\big),
\]
and therefore $\mu_k\to\mu^\star=(K^\star,K^\star)$.

(iv) (Uniform gradient bound) Define $d_0 = 1-\gamma(1-\overline\delta)^2$, 
\[
\|\nabla_\mu \cJ_\rho(\mu)\|_2 \le G_{\max}
 \triangleq 
{2\sqrt{2}\,\sigma_\rho^2 \bigl(rK_{\max}+\gamma |b|(1-\overline\delta)P_{\max}\bigr)}/{ d_0}.\]
\end{lem}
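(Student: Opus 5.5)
We will work entirely at the controller level, using Lemma~\ref{lem_state_value_function}: on $\cK_{\overline\delta}$ we have $\cJ_\rho(\mu)=(\sigma_\rho^2/2)(P_1+P_2)$, with $\bP=(\bI-\gamma\bA)^{-1}\mbf$. Inside each of the four sign regions of Table~\ref{tab_p_values}, $P_1$ and $P_2$ are rational functions of $(K_1,K_2)$. Their denominators are bounded below by $d_0=1-\gamma(1-\overline\delta)^2$, and $|K_i|\le K_{\max}$.

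\textbf{Global differentiability.} First we show that $\cJ_\rho$ is $C^1$ across the boundaries $a_i=0$. The entries of $\bA$ are $a_i^2\mathbbm{1}_{\{\cdot\}}$, and $a_i^2$ has zero derivative at $a_i=0$. Differentiating $\bP=\gamma\bA\bP+\mbf$ gives
\[
\partial\bP=(\bI-\gamma\bA)^{-1}(\gamma\,\partial\bA\,\bP+\partial\mbf),
\]
and both one-sided limits of $\partial\bA$ vanish at the switch. Hence the gradient is continuous. An envelope argument via $Q_\mu$ (Proposition~\ref{cor_action_state_value_function}) gives the cleaner form
\[
\partial_{K_1}\cJ_\rho=2c_1\bigl(rK_1+\gamma b P_{i}(a+bK_1)\bigr)/(1-\gamma),
\]
where $c_1$ is the occupancy moment. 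This is the policy-gradient theorem, and similarly for $K_2$.

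\textbf{(iv) and (i).} For (iv), bound $c_i\le\sigma_\rho^2/(2d_0)$, since the occupancy second moment decays at rate $\gamma(1-\overline\delta)^2$. Use $|K_i|\le K_{\max}$, $|a+bK_i|\le1-\overline\delta$, and $P_i\le P_{\max}$. Combining the two coordinates gives $G_{\max}$, with the $\sqrt2$ and the factor $2$ coming from the norm and the derivative; the constant will be tracked loosely. For (i), each factor in the gradient formula is Lipschitz on the compact set $\cK_{\overline\delta}$: $P_i$ and $c_i$ are rational with denominators at least $d_0$, hence have bounded derivatives on each closed region. The gradient is continuous across the switching boundaries, and $\cK_{\overline\delta}$ is a convex box in $(K_1,K_2)$. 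So the piecewise Lipschitz bounds patch along segments into a global $L_\mu$.

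\textbf{(ii) Gap lower bound.} Use the performance-difference identity
\[
J_\mu(x)-J_{\mu^\star}(x)=\sum_t\gamma^t\bigl[Q_{\mu^\star}(x_t,\mu(x_t))-J_{\mu^\star}(x_t)\bigr]
\]
along the trajectory of $\mu$. Since $J_{\mu^\star}=P^\star x^2$ is an exact quadratic, $Q_{\mu^\star}(x,\cdot)$ is exactly quadratic in $u$ with curvature $r+\gamma b^2P^\star$ and minimizer $K^\star x$. Hence
\[
Q_{\mu^\star}(x,Kx)-J_{\mu^\star}(x)=(r+\gamma b^2P^\star)(K-K^\star)^2x^2.
\]
Keeping only the $t=0$ term (all terms are nonnegative) and averaging over $\rho$ with Assumption~\ref{assump_initial_distribution} gives
\[
(r+\gamma b^2P^\star)(\sigma_\rho^2/2)\bigl[(K_1-K^\star)^2+(K_2-K^\star)^2\bigr].
\]

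\textbf{(iii) PL inequality.} We follow \cite{bhandari2024global}. The key facts are:
\begin{itemize}
\item policy improvement closure (Proposition~\ref{cor_action_state_value_function});
\item strong convexity of $\cB(\cdot\mid\eta_\mu,J_\mu)$ with modulus $\widehat\alpha$ (Lemma~\ref{lem:Bstrongconvex});
\item $\nabla_{\mu'}\cB|_{\mu'=\mu}=(1-\gamma)\nabla\cJ_\rho(\mu)$, which is a positive multiple by the envelope formula.
\end{itemize}
Strong convexity gives
\[
\cB(\mu)-\min_{\mu'}\cB\le\frac{\|\nabla\cB(\mu)\|^2}{2\widehat\alpha}.
\]
The performance-difference lemma gives
\[
(1-\gamma)\bigl(\cJ_\rho(\mu)-\cJ_\rho(\mu^\star)\bigr)\le\kappa\bigl(\cB(\mu)-\min\cB\bigr),
\]
where $\kappa$ bounds the distribution mismatch between $\eta_{\mu^\star}$ and $\eta_\mu$ in the two weighted moments. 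By the moment bounds $c_i\ge(1-\gamma)\sigma_\rho^2/2$ and $c_i\le\sigma_\rho^2/(2d_0)$, we expect $\kappa\lesssim 1/((1-\gamma)d_0)$.

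\textbf{Main obstacle.} This mismatch step is the hardest. The minimizer of $\cB$ is a policy-iteration step and may leave $\cK_{\overline\delta}$. Strong convexity on all of $\R^2$ (Lemma~\ref{lem:Bstrongconvex}) handles this. We would first try the one-step-improvement argument: compare $\cJ_\rho(\mu^\star)$ through $Q_\mu(x,\mu^\star(x))\ge\min_uQ_\mu(x,u)$ along $\eta_{\mu^\star}$. This gives the inequality with the ratio of the $c_i$'s.

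\textbf{Remaining claims.}
\begin{itemize}
\item The upper bound $C_{\mathrm{grad}}$ follows from smoothness, $\nabla\cJ_\rho(\mu^\star)=0$ with $\mu^\star$ interior to $\cK_{\overline\delta}$, and (ii).
\item The linear rate follows from the descent lemma, $\cJ_{k+1}\le\cJ_k-\frac{\iota}{2}\|\nabla\cJ_k\|^2$ for $\iota\le1/L_\mu$, combined with PL.
\item Convergence $\mu_k\to\mu^\star$ then follows from (ii).
\end{itemize}
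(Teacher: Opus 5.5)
Your proposal is correct. Parts (i) and (ii) follow the paper. The map $\bP$ is $C^{1,1}$ because $(\cdot)_\pm^2$ have Lipschitz derivatives. The gap bound is the performance-difference identity plus completing the square; your ``keep only the $t=0$ term'' is exactly the paper's use of $c_i\ge(1-\gamma)\sigma_\rho^2/2$.

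Parts (iii) and (iv) take different routes. For (iii), the paper only checks the hypotheses of \cite[Corollary~1]{bhandari2024global} (differentiability, closure under greedy improvement, strong convexity of $\cB$, $\mu^\star$ in the class) and invokes it. You instead reconstruct that argument directly:
\begin{itemize}
\item strong convexity of $\cB$ on all of $\R^2$ bounds the improvement gap by $\|\nabla\cB(\mu)\|_2^2/(2\widehat\alpha)$;
\item the envelope identity gives $\nabla\cB(\mu)=(1-\gamma)\nabla\cJ_\rho(\mu)$;
\item the reverse performance-difference lemma along $\eta_{\mu^\star}$, combined with degree-two homogeneity, finishes the job. Homogeneity gives $J_\mu(x)-\min_u Q_\mu(x,u)=d_1x^2$ on $x\ge0$ and $d_2x^2$ on $x<0$ with $d_i\ge0$, so the distribution mismatch reduces to $\kappa=\max_i c_i^\star/c_i$.
\end{itemize}
This yields an explicit $\alpha_\mu=\widehat\alpha/((1-\gamma)\kappa)$. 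It also makes the mismatch constant, which the citation leaves implicit, visibly finite under Assumption~\ref{assump_initial_distribution}. It is also why the greedy step leaving $\cK_{\overline\delta}$ is harmless, as you noted. The paper's route is shorter but delegates all of this to the cited result. For (iv), the paper differentiates $\bP=(\bI-\gamma\bA)^{-1}\mbf$ and bounds in $\|\cdot\|_\infty$, whereas you go through the policy-gradient formula in the occupancy moments $c_i$. Both are fine.

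One correction in (iv). With $\partial_{K_1}\cJ_\rho=2c_1\bigl(rK_1+\gamma bP_i(a+bK_1)\bigr)/(1-\gamma)$, your bound $c_i\le\sigma_\rho^2/(2d_0)$ gives $\sqrt2\,\sigma_\rho^2(\cdots)/((1-\gamma)d_0)$. That exceeds the stated $G_{\max}$ whenever $\gamma>1/2$. Use instead $c_1\le c_1+c_2=(1-\gamma)\sum_t\gamma^t\E[x_t^2]\le(1-\gamma)\sigma_\rho^2/d_0$. The factor $(1-\gamma)$ then cancels, giving exactly $2\sigma_\rho^2\bigl(rK_{\max}+\gamma|b|(1-\overline\delta)P_{\max}\bigr)/d_0$ per coordinate and hence $G_{\max}$.

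You also assert without proof that $\mu^\star$ is interior to $\cK_{\overline\delta}$. You need this for $\nabla\cJ_\rho(\mu^\star)=0$ in $C_{\mathrm{grad}}$ and for bounding $c_i^\star$. It has a one-line justification: $a+bK^\star=ar/(r+\gamma b^2P^\star)$, so $|a+bK^\star|\le|a|\le 1-\delta_0<1-\overline\delta$ in the regime $\delta<\delta_0$.
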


\begin{proof}
By Lemma~\ref{lem_state_value_function}, $\cJ_\rho(\mu)=\frac{\sigma_\rho^2}{2}\bigl(P_1(\mu)+P_2(\mu)\bigr)$, and since $(\cdot)_+^2$ and $(\cdot)_-^2$ have Lipschitz derivatives and $\|\bA(\mu)\|_\infty\le (1-\overline\delta)^2$ on $\cK_{\overline\delta}$, the map $\mu\mapsto \bP(\mu)$ is $\cC^{1,1}$ on~$\cK_{\overline\delta}$. Hence $\nabla_\mu \cJ_\rho$ is Lipschitz on $\cK_{\overline\delta}$, which proves (i).

For (ii), \cite[Performance Difference Lemma~2]{agarwal2021theory} gives
\[
\cJ_\rho(\mu)-\cJ_\rho(\mu^\star)
=
\frac{1}{1-\gamma}\E_{\eta_\mu}\!\left[Q_{\mu^\star}(x,\mu(x))-Q_{\mu^\star}(x,\mu^\star(x))\right].
\]
Since $Q_{\mu^\star}(x,u)=qx^2+ru^2+\gamma P^\star(ax+bu)^2$ and $
K^\star=-{\gamma a b P^\star}/{(r+\gamma b^2P^\star)}$, completing the square yields
\[
Q_{\mu^\star}(x,u)-Q_{\mu^\star}(x,K^\star x)
=
(r+\gamma b^2P^\star)(u-K^\star x)^2.
\]
Using $\mu(x)=K_1x$ on $\{x\ge0\}$ and $\mu(x)=K_2x$ on $\{x<0\}$, we obtain
\[
\cJ_\rho(\mu){-}\cJ_\rho(\mu^\star)
{=}
\frac{r{+}\gamma b^2P^\star}{1-\gamma}
\big(
c_1(K_1{-}K^\star)^2 + c_2(K_2{-}K^\star)^2
\big).
\]

For (iii), we verify the hypotheses of \cite[\corollaryname~1]{bhandari2024global}
in the present controller class. Parameterize the controller space by
$K=(K_1,K_2)\in\mathbb R^2$ through $
\mu_K(x)=K_1x\,\mathbbm 1_{\{x\ge 0\}}+K_2x\,\mathbbm 1_{\{x<0\}}$. For each fixed $x$, $\mu_K(x)$ is affine in $K$, and by Proposition~\ref{cor_action_state_value_function},
$Q_\mu(x,u)$ is $\cC^1$ in $(x,u)$. Hence, the weighted Bellman objective is continuously differentiable
with respect to $K$, so \cite[Condition 0]{bhandari2024global} holds.
By Proposition~\ref{cor_action_state_value_function}, the class is closed under greedy improvement.
By Lemma~\ref{lem:Bstrongconvex}, the weighted Bellman objective is globally strongly convex in~$K$.
Finally, $\mu^\star=(K^\star,K^\star)$ belongs to the class. Therefore
\cite[\corollaryname~1]{bhandari2024global} applies.

Also, by (i) and (ii),
\[
\|\nabla_\mu \cJ_\rho(\mu)\|_2^2
\le
{L_\mu^2\bigl(\cJ_\rho(\mu)-\cJ_\rho(\mu^\star)\bigr)}/({(r+\gamma b^2P^\star)(\sigma_\rho^2/2)})
.
\]
The linear rate for gradient descent with $0<\iota\le 1/L_\mu$ is then the standard result for
$L_\mu$-smooth PL functions~\cite{karimi2016linear}, and the quadratic-growth bound in (ii)
implies $\mu_k\to\mu^\star$.

For (iv), \(|K_i|\le K_{\max}\) on \(\cK_{\overline\delta}\), and Lemma~\ref{lem_state_value_function} gives \(\|\bP(\mu)\|_\infty\le P_{\max}\). Also, $\|(I-\gamma \bA(\mu))^{-1}\|_\infty \le (d_0)^{-1}$
\[
\|\partial_{K_i}\mbf(\mu)\|_\infty \le 2rK_{\max},
\quad \text{and} \quad
\|\partial_{K_i}\bA(\mu)\|_\infty \le 2|b|(1-\overline\delta).
\]
Differentiating \(\bP(\mu)=(I-\gamma \bA(\mu))^{-1}\mbf(\mu)\) gives
\[
\partial_{K_i}\bP(\mu)
=
(I-\gamma \bA(\mu))^{-1}
\bigl(\partial_{K_i}\mbf(\mu)+\gamma(\partial_{K_i}\bA(\mu))\bP(\mu)\bigr),
\]
hence $\|\partial_{K_i}\bP(\mu)\|_\infty
\le
\frac{2}{d_0}\bigl(rK_{\max}+\gamma |b|(1-\overline\delta)P_{\max}\bigr)$.

Since \(\cJ_\rho(\mu)=({\sigma_\rho^2}/{2})(P_1(\mu)+P_2(\mu))\), we obtain
\[
|\partial_{K_i}\cJ_\rho(\mu)|
{\le}
\sigma_\rho^2\|\partial_{K_i}\bP(\mu)\|_\infty
{\le}
\frac{2\sigma_\rho^2}{d_0}\bigl(rK_{\max}{+}\gamma |b|(1{-}\overline\delta)P_{\max}\bigr).
\]
Taking the Euclidean norm completes the proof.
\end{proof}

\section{ReLU Landscape} \label{sec_nn_landscape}

As a next step, we study the neural-network landscape induced by the map $\theta \mapsto (K_{1,\theta}, K_{2,\theta})$, where $\theta \in \R^{2m}$ and $K_{1,\theta}, K_{2,\theta} \in \R$. In principle, the pair $(K_{1,\theta}, K_{2,\theta})$ can be represented with only a few parameters. In particular, for system~\eqref{eq_dynamics}, one may use $\theta \in \R^4$. However, the two effective gains are generated by different subsets of neurons, depending on the signs of their first-layer weights. As a consequence, a small network may represent one side well while providing little flexibility on the other.

The next lemma shows that the NTK-scaled Gaussian initialization places the initial controller in the stabilizing set with exponentially high probability.

\begin{lem} \label{lem_initial_gains}
Let Assumptions~\ref{assump_controllability} and~\ref{assump_open_loop_margin} hold, consider the system \eqref{eq_dynamics} and the control law \eqref{eq_piecewise_controller}. Initialize the network independently as $w_{j,0} \sim \cN\!\left(0,{\beta}/{m}\right)$, and $v_{j,0} \sim \cN\!\left(0,{\alpha}/{m}\right)$, for $j=1,\dots,m$. Fix any $\delta$ such that $0<\delta<\delta_0$. Then there exist constants $m_0 \in \N$ and $C_1>0$, depending only on $\alpha,\beta,\delta,\delta_0$, and $|b|$, such that for all $m \ge m_0$, $    \P\left(|a+bK_{1,\theta_0}| \le 1-\delta,\; |a+bK_{2,\theta_0}| \le 1-\delta\right)
    \ge 1-e^{-C_1 m}$.
\end{lem}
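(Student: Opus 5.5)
The plan is to show that both effective gains at initialization are small, of order $\sqrt{\alpha\beta/m}$, with exponentially high probability. Then Assumption~\ref{assump_open_loop_margin} alone keeps the closed-loop coefficients inside $[-(1-\delta),1-\delta]$. Concretely, by the triangle inequality,
\[
|a+bK_{i,\theta_0}|\le |a|+|b||K_{i,\theta_0}|\le 1-\delta_0+|b||K_{i,\theta_0}|, \qquad i=1,2,
\]
so it suffices to show that $|K_{i,\theta_0}|\le t\triangleq(\delta_0-\delta)/|b|$ for both $i$. Here $t>0$ because $\delta<\delta_0$ and $b\neq0$ by Assumption~\ref{assump_controllability}. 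A union bound over $i=1,2$ then reduces the lemma to a one-sided tail estimate for each gain.

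To handle the fact that the index sets $\sJ_1,\sJ_2$ are themselves random, I will condition on the first-layer weights $w_0=(w_{1,0},\dots,w_{m,0})$. Given $w_0$, the gain $K_{1,\theta_0}=\sum_{j}v_{j,0}\,w_{j,0}\mathbbm{1}_{\{w_{j,0}\ge0\}}$ is a linear combination of the independent Gaussians $v_{j,0}\sim\cN(0,\alpha/m)$ with fixed coefficients. It is therefore exactly Gaussian with mean zero and variance $\frac{\alpha}{m}\sum_{j\in\sJ_1}w_{j,0}^2\le\frac{\alpha}{m}\|w_0\|_2^2$. The same holds for $K_{2,\theta_0}$. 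The Gaussian tail bound then gives
\[
\P\bigl(|K_{i,\theta_0}|>t \,\big|\, w_0\bigr)\le 2\exp\!\Bigl(-\frac{t^2 m}{2\alpha\|w_0\|_2^2}\Bigr).
\]

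Next I control $\|w_0\|_2^2$. Since $\frac{m}{\beta}\|w_0\|_2^2\sim\chi^2_m$, a standard chi-square concentration bound (Laurent--Massart, or Bernstein for sub-exponential variables) gives $\P(\|w_0\|_2^2>2\beta)\le e^{-c m}$ for an absolute constant $c>0$. On the event $\{\|w_0\|_2^2\le 2\beta\}$, each conditional tail is at most $2e^{-t^2m/(4\alpha\beta)}$. Combining everything,
\[
\P(\text{failure})\le e^{-cm}+4e^{-t^2 m/(4\alpha\beta)}.
\]
This is at most $e^{-C_1 m}$ for, e.g., $C_1=\tfrac12\min\{c,\,t^2/(4\alpha\beta)\}$ and $m\ge m_0$, where $m_0$ is chosen large enough to absorb the prefactor $5$. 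Both constants depend only on $\alpha,\beta,\delta,\delta_0,|b|$, as the lemma requires.

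The only delicate point is the dependence between the random sign partition and the summands. Treating $v_{j,0}w_{j,0}$ directly as sub-exponential products and applying Bernstein's inequality would also work, but conditioning on $w_0$ sidesteps that entirely and leaves just a Gaussian tail plus a chi-square bound. Neurons with $w_{j,0}=0$ occur with probability zero and do not affect the argument.
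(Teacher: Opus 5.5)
Your proof is correct. The first step, reducing to $|K_{i,\theta_0}|\le(\delta_0-\delta)/|b|$ via the triangle inequality and then taking a union bound over $i=1,2$, is exactly what the paper does.

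The difference is the concentration tool. The paper writes $K_{1,\theta_0}=\sum_j X_j$ with $X_j=v_{j,0}w_{j,0}\mathbbm{1}_{\{w_{j,0}\ge0\}}$. It observes that the $X_j$ are i.i.d. and centered, since $v_{j,0}$ is independent of $w_{j,0}$ with mean zero. They are also sub-exponential with $\|X_j\|_{\psi_1}\lesssim\sqrt{\alpha\beta}/m$, being products of sub-Gaussians, so Bernstein's inequality applies in one shot. You instead condition on $w_0$, which makes each gain exactly Gaussian with variance at most $\frac{\alpha}{m}\|w_0\|_2^2$. You then pay for a separate $\chi^2_m$ tail bound on $\|w_0\|_2^2$.

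The paper's route is shorter and needs only sub-Gaussian tails for both layers. Yours uses only elementary Gaussian and chi-square tails and gives an explicit rate $C_1=\tfrac12\min\{c,(\delta_0-\delta)^2/(4b^2\alpha\beta)\}$. You also absorb the constant prefactor into $m_0$ explicitly. The paper leaves that step implicit: its Bernstein-plus-union bound gives failure probability $4e^{-C_1 m}$, not $e^{-C_1 m}$, so a shrink of $C_1$ and an $m_0$ are tacitly needed.

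One small correction to your framing: the random sign partition is not a delicate point in the paper's approach. Each summand carries its own indicator, so $X_j$ depends only on $(w_{j,0},v_{j,0})$, and the summands are i.i.d. without any conditioning.
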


\begin{proof}
Let $\Delta \triangleq \delta_0-\delta>0$ and $\varepsilon \triangleq \Delta/|b|$. Since $|a|\le 1-\delta_0$, if $|K_{i,\theta_0}| \le \varepsilon$, then $
    |a+bK_{i,\theta_0}|
    \le |a|{+}|b|\,|K_{i,\theta_0}|
    \le (1-\delta_0)+\Delta
    {=} 1{-}\delta,
    \  i\in\{1,2\}$. Thus it suffices to control $K_{1,\theta_0}$ and $K_{2,\theta_0}$. Define $X_j \triangleq v_{j,0} w_{j,0}\mathbbm{1}_{\{w_{j,0}\ge0\}}$ and $Y_j \triangleq v_{j,0} w_{j,0}\mathbbm{1}_{\{w_{j,0}<0\}}$. Then $K_{1,\theta_0}=\sum_{j=1}^m X_j$ and $K_{2,\theta_0}=\sum_{j=1}^m Y_j$. Since $v_{j,0}$ is independent of $w_{j,0}$ and $\E[v_{j,0}]=0$, we have $\E[X_j]=\E[Y_j]=0$. Moreover, $w_{j,0}$ and $v_{j,0}$ are centered subgaussian with $\|w_{j,0}\|_{\psi_2}\lesssim \sqrt{\beta/m}$ and $\|v_{j,0}\|_{\psi_2}\lesssim \sqrt{\alpha/m}$. Hence $X_j$ and $Y_j$ are centered subexponential and satisfy $\|X_j\|_{\psi_1},\|Y_j\|_{\psi_1}\lesssim \sqrt{\alpha\beta}/m$~\cite{vershynin2018high}. Therefore, Bernstein's inequality yields a constant $C_1>0$, depending only on $\alpha,\beta,\delta,\delta_0$, and $|b|$, such that
\begin{align}
    \P\!\left(|K_{i,\theta_0}| \ge \varepsilon\right)
    \le 2e^{-C_1 m},
    \qquad i\in\{1,2\},
\end{align}
The desired result follows by applying the union bound.
\end{proof}

\begin{lem} \label{lem_bounded_norm}
   Under the initialization of Lemma~\ref{lem_initial_gains}, select $\cR > 0$ such that $\cR^2 > \alpha + \beta$. Then there exists $C_2 > 0$ such that $
        \P \big( \sum_{j=1}^{m} \big( w_{j,0}^2 + v_{j,0}^2 \big) \le \cR^2 \big) \ge 1 - e^{-C_2 m}$. 
\end{lem}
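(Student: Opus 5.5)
The plan is to treat the squared norm as a sum of independent chi-square terms and apply a one-sided concentration bound. Write
\[
\sum_{j=1}^m w_{j,0}^2 = \frac{\beta}{m}\sum_{j=1}^m g_j^2,\qquad \sum_{j=1}^m v_{j,0}^2 = \frac{\alpha}{m}\sum_{j=1}^m h_j^2,
\]
where $g_j,h_j$ are i.i.d.\ standard Gaussians. The means are $\E[\sum_j w_{j,0}^2]=\beta$ and $\E[\sum_j v_{j,0}^2]=\alpha$, so the total has expectation $\alpha+\beta<\cR^2$. Set the gap $\tau\triangleq \cR^2-(\alpha+\beta)>0$. The event $\sum_j (w_{j,0}^2+v_{j,0}^2)>\cR^2$ is contained in the union of $\{\sum_j w_{j,0}^2>\beta+\tau/2\}$ and $\{\sum_j v_{j,0}^2>\alpha+\tau/2\}$.

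Each event is an upper-tail deviation of an average of i.i.d.\ centered subexponential variables $g_j^2-1$ (respectively $h_j^2-1$), whose $\psi_1$-norms are absolute constants, as in the proof of Lemma~\ref{lem_initial_gains}. Rescaling gives
\[
\P\Big(\tfrac1m\sum_j (g_j^2-1) > \tfrac{\tau}{2\beta}\Big).
\]
Bernstein's inequality~\cite{vershynin2018high}, or equivalently the Laurent--Massart chi-square bound, bounds this by $\exp(-c\,m\min\{t^2,t\})$ with $t=\tau/(2\beta)$. The same argument with $t=\tau/(2\alpha)$ handles the $v$ term.

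A union bound then gives total failure probability at most $2e^{-c' m}$, where $c'$ depends only on $\alpha,\beta,\cR$. To state the result as $e^{-C_2 m}$, absorb the factor $2$ by taking $C_2=c'/2$, which is valid for $m\ge m_0=\lceil 2\ln 2/c'\rceil$. For small $m$ the bound can simply be stated with the constant adjusted, or one can note, consistent with the paper's use of $m\ge m_0$ elsewhere, that it only needs to hold for large $m$. There is no real obstacle here; the only care needed is the handling of constants, since the probability cannot be exactly $1$ at small $m$, and this is resolved by the $m_0$ convention.
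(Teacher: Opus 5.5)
Your proposal is correct and follows the paper's own argument: write both sums as scaled chi-square sums and apply the Laurent--Massart (equivalently Bernstein) upper-tail bound, which the paper cites without further detail. Your gap-splitting and union bound fill in details the paper omits. You could skip the factor $2$ altogether by applying Laurent--Massart's Lemma~1 directly to the weighted sum $(\beta/m)\sum_j g_j^2+(\alpha/m)\sum_j h_j^2$. In any case, a constant $C_2>0$ valid for every $m$ exists, because for each fixed $m$ the failure probability is strictly less than $1$.
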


\begin{proof}
    Given the rule to select the initial weights, we have $\sum_{j=1}^{m} w_{j, 0}^2 = (\beta/m) \sum_{m} z_j^2$ where $z_j \sim Z$, and $Z$ is the standard Gaussian distribution. Applying the same analysis $\sum_{j=1}^{m} v_{j,0}^2 = (\alpha / m) \sum_{j=1}^{m} z_{j}^2$. By \cite[\lemname 1]{ massart2000adaptive}, one can get the upper-tail bound.
\end{proof}

\begin{lem}\label{lem_many_neurons}
Let $\{w_{j,0}\}_{j=1}^{m}$ and $\{v_{j,0}\}_{j=1}^{m}$ be initialized independently as in Lemma~\ref{lem_initial_gains}. Fix constants $0<\tau<W<\infty$ and $V>0$, and define $
        p \triangleq \big( \Phi \big( {W}/{\sqrt{\beta}} \big) - \Phi \big( {\tau}/{\sqrt{\beta}} \big) \big) \big(2 \Phi \left( {V}/{\sqrt{\alpha}} \right) - 1 \big)$. Then, for every $c\in(0,p)$, there exists $C_3>0$ such that $
        \P \left( |\cC_{1}| \geq cm \right) \geq 1 - e^{-C_3 m}$, $\P \left( |\cC_{2}| \geq cm \right) \geq 1 - e^{-C_3 m}$, where

        \vspace{-0.6cm}
        \begin{align*}
            \cC_1& {\triangleq} \big\{ j \in \sI \big| {\tau}/{\sqrt{m}} \le w_{j, 0} \le {W}/{\sqrt{m}}, \; |v_{j, 0} | \le {V}/{\sqrt{m}} \big\}, \\ 
            \cC_2 & \triangleq \big\{ j \in \sI \big| {-W}/{\sqrt{m}} \le w_{j, 0} \le {-\tau}/{\sqrt{m}}, \; |v_{j, 0} | \le {V}/{\sqrt{m}} \big\}.
        \end{align*}
        \vspace{-0.7cm}
\end{lem}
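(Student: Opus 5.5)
The plan is to write $|\cC_1|$ and $|\cC_2|$ as sums of i.i.d. Bernoulli indicators and apply a Chernoff/Hoeffding lower-tail bound. For $j\in\sI$, set
\[
\xi_j \triangleq \mathbbm{1}_{\{\tau/\sqrt{m}\le w_{j,0}\le W/\sqrt{m}\}}\,\mathbbm{1}_{\{|v_{j,0}|\le V/\sqrt{m}\}}, \qquad |\cC_1|=\sum_{j=1}^m \xi_j .
\]
The pairs $(w_{j,0},v_{j,0})$ are independent across $j$, so the $\xi_j$ are i.i.d. Bernoulli random variables. Let $\zeta_j$ be the analogous indicator with the $w$-window replaced by $[-W/\sqrt{m},-\tau/\sqrt{m}]$, so that $|\cC_2|=\sum_j\zeta_j$.

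Next I compute the success probability. Within a neuron, $w_{j,0}$ and $v_{j,0}$ are independent, so the probability factorizes. Write $w_{j,0}=\sqrt{\beta/m}\,z$ and $v_{j,0}=\sqrt{\alpha/m}\,z'$ with $z,z'$ standard Gaussian. The factor $\sqrt{m}$ cancels:
\[
\P\bigl(\tau/\sqrt m\le w_{j,0}\le W/\sqrt m\bigr)=\Phi(W/\sqrt\beta)-\Phi(\tau/\sqrt\beta),
\qquad
\P\bigl(|v_{j,0}|\le V/\sqrt m\bigr)=2\Phi(V/\sqrt\alpha)-1 .
\]
Hence $\E[\xi_j]=p$, and this value does not depend on $m$. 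The Gaussian law of $w_{j,0}$ is symmetric, so the negative window has the same probability and $\E[\zeta_j]=p$ as well. Note that $p>0$ because $\tau<W$ and $V>0$.

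For the concentration step, fix $c\in(0,p)$. Hoeffding's inequality for bounded variables in $[0,1]$ gives
\[
\P\Bigl(\sum_j\xi_j< cm\Bigr)\le \exp\!\bigl(-2(p-c)^2m\bigr).
\]
Taking $C_3\triangleq 2(p-c)^2>0$ yields the first bound. The identical argument applied to the $\zeta_j$ yields the second bound with the same constant. If a joint statement is wanted, a union bound gives $1-2e^{-C_3m}$. That can be converted into the stated form by slightly shrinking $C_3$ for $m$ large, or simply by keeping the two marginal statements as written.

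I do not expect a genuine obstacle here. The only point requiring care is that the windows scale as $1/\sqrt{m}$, exactly matching the standard deviations $\sqrt{\beta/m}$ and $\sqrt{\alpha/m}$. Because of this matching, $p$ is independent of $m$ and the exponent is linear in $m$. One also needs independence across neurons, which is part of the initialization assumption of Lemma~\ref{lem_initial_gains}.
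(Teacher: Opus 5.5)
Your proposal is correct and follows the paper's proof essentially step by step. Both arguments write $|\cC_1|$ and $|\cC_2|$ as sums of i.i.d.\ Bernoulli$(p)$ indicators, compute $p$ by factorizing over the independent $w_{j,0}$ and $v_{j,0}$ (the $\sqrt{m}$ scaling cancels), use Gaussian symmetry for $\cC_2$, and apply a lower-tail concentration bound. The only cosmetic difference is that you use Hoeffding's inequality, which gives the explicit constant $C_3 = 2(p-c)^2$, whereas the paper invokes a generic Chernoff bound with an unspecified $C_3(c,p)>0$.
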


\begin{proof}
Let $X_j \triangleq \mathbbm{1}_{\{j\in\cC_1\}}$. Since $w_{j,0}=\sqrt{\beta/m}\,Z_j^{(w)}$ and $v_{j,0}=\sqrt{\alpha/m}\,Z_j^{(v)}$, with $Z_j^{(w)}$ and $Z_j^{(v)}$ independent standard Gaussians, we have $\P(j\in\cC_1)
{=}
\P\!\left({\tau}/{\sqrt m} {\le} w_{j,0}{\le} {W}/{\sqrt m}\right)
\P\!\left(|v_{j,0}|{\le} {V}/{\sqrt m}\right)
=p$. By independence across $j$, the variables $\{X_j\}_{j=1}^m$ are i.i.d. Bernoulli$(p)$, so $|\cC_1|=\sum_{j=1}^m X_j\sim\mathrm{Binomial}(m,p)$. Since $c<p$, Chernoff's inequality yields a constant $C_3=C_3(c,p)>0$ such that $\P\left(|\cC_1|<cm\right)\le e^{-C_3 m}$, hence $\P\!\left(|\cC_1|\ge cm\right)\ge 1-e^{-C_3 m}$. The same argument applies to $\cC_2$. Indeed, by symmetry of the Gaussian law, $\P(j\in\cC_2)=\P(j\in\cC_1)=p$, so $|\cC_2|\sim\mathrm{Binomial}(m,p)$ and therefore $\P\!\left(|\cC_2|\ge cm\right)\ge 1-e^{-C_3 m}$.
\end{proof}

Lemma~\ref{lem_initial_gains} shows that $\theta_0$ yields a stable initial controller, Lemma~\ref{lem_bounded_norm} controls the mass of the initial weigths and
Lemma~\ref{lem_many_neurons} guarantees the existence of backbone index sets $\cC_1$ and $\cC_2$
defined from the initialization. These index sets remain fixed throughout training, although the
corresponding weights $(w_{j,k},v_{j,k})$ evolve with $k$. We now define the potentially problematic
neurons by
\begin{small}
\begin{align*}
\cS_w {\triangleq} \left\{ j\in \sI \,\Big|\, |w_{j,0}|{\le} {\tau_w}/{\sqrt m}\right\},\
\cS_v {\triangleq} \left\{ j\in \sI \,\Big|\, |v_{j,0}|{>} {V}/{\sqrt m}\right\}.
\end{align*}
\end{small}
Define the corresponding bad-set masses by
\begin{small}
\begin{align*}
\cM_w \triangleq \sum_{j=1}^m (w_{j,0}^2+v_{j,0}^2)\mathbbm{1}_{\{j\in \cS_w\}}, \
\cM_v \triangleq \sum_{j=1}^m (w_{j,0}^2+v_{j,0}^2)\mathbbm{1}_{\{j\in \cS_v\}}.
\end{align*}
\end{small}
For deterministic thresholds $M_w,M_v>0$, define the event
\[
\cE_\Xi \triangleq \{\cM_w\le M_w,\ \cM_v\le M_v\}.
\]
Note that $\cM_w$ is defined from $\cS_w$, not from $\cC_1$, and both $\cM_w$ and $\cM_v$ are
random sums determined by the initialization.

\begin{lem}\label{lem_bad_mass_event}
Suppose the initialization is Gaussian as in Lemma~\ref{lem_initial_gains}. If $M_w>\E[\cM_w]$ and $M_v>\E[\cM_v]$, then there exists a constant $C_4>0$ such that $\P(\cE_\Xi)\ge 1-e^{-C_4 m}$.
\end{lem}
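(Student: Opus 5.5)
We treat $\cM_w$ and $\cM_v$ separately and combine them with a union bound:
\[
\P(\cE_\Xi^c)\le \P(\cM_w>M_w)+\P(\cM_v>M_v).
\]
Each of $\cM_w$ and $\cM_v$ is a sum of $m$ i.i.d.\ nonnegative random variables, because the pairs $(w_{j,0},v_{j,0})$ are independent across $j$. Define
\[
\xi_j\triangleq (w_{j,0}^2+v_{j,0}^2)\mathbbm{1}_{\{|w_{j,0}|\le \tau_w/\sqrt m\}}.
\]
Rescale by writing $w_{j,0}=\sqrt{\beta/m}\,Z_j^{(w)}$ and $v_{j,0}=\sqrt{\alpha/m}\,Z_j^{(v)}$. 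Then
\[
\xi_j=\tfrac1m\,\zeta_j,\qquad \zeta_j\triangleq \bigl(\beta (Z_j^{(w)})^2+\alpha (Z_j^{(v)})^2\bigr)\mathbbm{1}_{\{|Z_j^{(w)}|\le \tau_w/\sqrt\beta\}}.
\]
The variables $\zeta_j$ are i.i.d.\ and their law does not depend on $m$. Consequently $\cM_w=\frac1m\sum_{j=1}^m\zeta_j$ is an empirical average, and $\E[\cM_w]=\E[\zeta_1]$ is a constant independent of $m$. The same reduction applies to $\cM_v$ with
\[
\zeta'_j\triangleq \bigl(\beta (Z_j^{(w)})^2+\alpha (Z_j^{(v)})^2\bigr)\mathbbm{1}_{\{|Z_j^{(v)}|> V/\sqrt\alpha\}}.
\]

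Next we check tail properties. Each $\zeta_j$ and $\zeta'_j$ is bounded above by $\beta (Z^{(w)})^2+\alpha (Z^{(v)})^2$. That quantity is a sum of scaled $\chi^2_1$ variables, hence subexponential with $\psi_1$-norm $\lesssim \alpha+\beta$. The indicator only decreases the variable, so $\|\zeta_j\|_{\psi_1},\|\zeta'_j\|_{\psi_1}\lesssim \alpha+\beta$. Centering changes the $\psi_1$-norm by at most a constant factor \cite{vershynin2018high}, so the centered variables $\zeta_j-\E\zeta_j$ are i.i.d., mean zero and subexponential.

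We then apply Bernstein's inequality with deviation $t_w\triangleq M_w-\E[\cM_w]>0$:
\[
\P(\cM_w>M_w)=\P\Bigl(\tfrac1m\sum_{j=1}^m(\zeta_j-\E\zeta_j)>t_w\Bigr)\le \exp\!\Bigl(-c\,m\min\Bigl\{\tfrac{t_w^2}{K^2},\tfrac{t_w}{K}\Bigr\}\Bigr),
\]
where $K\lesssim\alpha+\beta$. This is exactly where the strict inequality $M_w>\E[\cM_w]$ is used: it makes $t_w$ a positive constant independent of $m$. The analogous bound holds for $\cM_v$ with $t_v=M_v-\E[\cM_v]$. Summing the two tails gives $\P(\cE_\Xi^c)\le 2e^{-c'm}$ with $c'=c\min\{t_w^2/K^2,t_w/K,t_v^2/K^2,t_v/K\}$. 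The factor $2$ is absorbed by taking $C_4<c'$ and $m$ large; for small $m$ one shrinks $C_4$, or states the bound for $m\ge m_0$ as in Lemma~\ref{lem_initial_gains}.

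The main point of care is to make sure that the thresholds $\tau_w$ and $V$ scale as $1/\sqrt m$, so that after rescaling the indicator events have $m$-independent probability. This is what makes $\E[\cM_w]$ and $\E[\cM_v]$ constants and the deviations $t_w,t_v$ fixed positive numbers. Once that is verified, everything reduces to a standard Bernstein bound for sums of truncated $\chi^2$-type variables.
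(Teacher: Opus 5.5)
Your proposal is correct and follows the same route as the paper: i.i.d.\ nonnegative subexponential summands, Bernstein's inequality applied separately to $\cM_w$ and $\cM_v$, and then a union bound. Your rescaling to $\cM_w=\frac1m\sum_j\zeta_j$, where the law of $\zeta_j$ does not depend on $m$, makes explicit something the paper's terse proof leaves implicit. Namely, $\E[\cM_w]$, $\E[\cM_v]$, the deviations $t_w,t_v$, and hence the Bernstein exponent are all independent of $m$.
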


\begin{proof}
Set
\[
Z_j^{(w)} \triangleq (w_{j,0}^2+v_{j,0}^2)\mathbbm{1}_{\{j\in\cS_w\}},
\qquad
Z_j^{(v)} \triangleq (w_{j,0}^2+v_{j,0}^2)\mathbbm{1}_{\{j\in\cS_v\}}.
\]
Since the initialization is i.i.d. across $j$, the variables $\{Z_j^{(w)}\}_{j=1}^m$ and
$\{Z_j^{(v)}\}_{j=1}^m$ are i.i.d. nonnegative subexponential random variables. Therefore
Bernstein's inequality yields constants $c_w,c_v>0$ such that
\[
\P(\cM_w>M_w)\le e^{-c_w m},
\qquad
\P(\cM_v>M_v)\le e^{-c_v m}.
\]
A union bound gives the result.
\end{proof}

\begin{remark}
The ReLU derivative is undefined at zero, so we use the convention $\sigma'(0)=0$. Under the initialization of Lemma~\ref{lem_initial_gains}, $w_{j,0}\neq 0$ almost surely for every~$j$. Thus, we only care if a neuron crosses zero during training.
\end{remark}

 We now study policy-gradient updates in weight space, i.e., Eq.~\eqref{eq_update_rule}. By Lemma~\ref{lem:Jrho_geometry}, the controller-level objective $\cJ_\rho(\mu)$ satisfies a PL inequality in $\mu=(K_1,K_2)$. However, its composition with the neural parameterization is nonconvex in $\theta$. The two gradients are related by the chain rule,
\begin{align}
   \nabla_\theta \cJ_{\rho}(\theta_k) &  = \dfrac{\partial \cJ_{\rho} \left( \mu_{\theta_k} \right) }{\partial K_{1, \theta_k}} \nabla_{\theta} K_{1, \theta_k} + \dfrac{\partial \cJ_{\rho} \left( \mu_{\theta_k} \right) }{\partial K_{2, \theta_k}} \nabla_{\theta} K_{2, \theta_k}, \\
   & = \bJ_\mu(\theta_k)^\top \nabla_\mu \cJ_\rho(\mu_{\theta_k}),
\end{align}
where $\bJ_\mu(\theta_k)\in\R^{2\times 2m}$ is the Jacobian of the map $\theta\mapsto \mu_\theta=(K_{1,\theta},K_{2,\theta})$. 
Let $\sJ_{1,k}\triangleq \{j:w_{j,k}\ge0\}$ and $\sJ_{2,k}\triangleq \{j:w_{j,k}<0\}$, we may write
\[
\frac{\partial K_{1,\theta_k}}{\partial w_{j,k}}=v_{j,k}\mathbbm{1}_{\{j \in \sJ_{1,k}\}},
\quad
\frac{\partial K_{1,\theta_k}}{\partial v_{j,k}}=w_{j,k}\mathbbm{1}_{\{j \in \sJ_{1,k}\}},
\]
\[
\frac{\partial K_{2,\theta_k}}{\partial w_{j,k}}=v_{j,k}\mathbbm{1}_{\{j \in \sJ_{2,k}\}},
\quad
\frac{\partial K_{2,\theta_k}}{\partial v_{j,k}}=w_{j,k}\mathbbm{1}_{\{j \in \sJ_{2,k}\}}.
\]
Stacking these derivatives gives the stated Jacobian. The row supports are disjoint because $\sJ_{1,k}\cap\sJ_{2,k}=\emptyset$. By defining $\theta_k=[\bw_k^\top\ \bv_k^\top]^\top\in\R^{2m}$, with $\bw_k,\bv_k\in\R^m$, we get
\[
\bJ_\mu(\theta_k)
=
\begin{bmatrix}
\mathbbm{1}_{\{\bw_k\ge0\}}\odot \bv_k & \mathbbm{1}_{\{\bw_k\ge0\}}\odot \bw_k\\
\mathbbm{1}_{\{\bw_k<0\}}\odot \bv_k & \mathbbm{1}_{\{\bw_k<0\}}\odot \bw_k
\end{bmatrix},
\]
where the indicators are understood entrywise. Let \(g_k \triangleq \nabla_\mu \cJ_\rho(\mu_{\theta_k}) = [g_{1,k},g_{2,k}]^\top\), then
\[
\frac{\partial \cJ_{\rho}(\theta_k)}{\partial w_{j,k}}
=
v_{j,k}\Big(g_{1,k}\mathbbm{1}_{\{j \in \sJ_{1,k}\}}+g_{2,k}\mathbbm{1}_{\{j \in \sJ_{2,k}\}}\Big),
\]
\[
\frac{\partial \cJ_{\rho}(\theta_k)}{\partial v_{j,k}}
=
w_{j,k}\Big(g_{1,k}\mathbbm{1}_{\{j \in \sJ_{1,k}\}}+g_{2,k}\mathbbm{1}_{\{j \in \sJ_{2,k}\}}\Big).
\]
Thus, each neuron is updated by the controller-gradient component associated with its current sign. Under \eqref{eq_update_rule}, neurons may cross zero and move from one effective gain to the other. Define $\sH_{1,k} {\triangleq} \{j:w_{j,k}{\ge}0,\ w_{j,k+1}{<}0\}$, $\sH_{2,k}{\triangleq} \{j:w_{j,k}{<}0,\ w_{j,k+1}{\ge}0\}$. We measure this redistribution by the crossing mass
$\Xi_k \triangleq \sum_{j\in \sH_{1,k}\cup \sH_{2,k}} v_{j,k}^2$.
Finally, define the masses on the positive and negative sides by
\[
\cG_{1,k}\triangleq \sum_{j\in \sJ_{1,k}}(w_{j,k}^2+v_{j,k}^2),
\qquad
\cG_{2,k}\triangleq \sum_{j\in \sJ_{2,k}}(w_{j,k}^2+v_{j,k}^2).
\]
These quantities will be used to control sign changes and mass redistribution during training. Define the ``no-crossing'' gains
\[
\widetilde K_{1,k+1} \triangleq \sum_{j\in \sJ_{1,k}} w_{j,k+1}v_{j,k+1}, \quad
\widetilde K_{2,k+1} \triangleq \sum_{j\in \sJ_{2,k}} w_{j,k+1}v_{j,k+1},
\]
the no-crossing controller $ \widetilde\mu_{k+1}\triangleq [\widetilde K_{1,k+1} , \widetilde K_{2,k+1} ]^\top$, 
and the crossing error vector $
\be_k \triangleq [
K_{1,\theta_{k+1}}-\widetilde K_{1,k+1} ,
K_{2,\theta_{k+1}}-\widetilde K_{2,k+1} ]^\top$. Further define $D_k\triangleq \mathrm{diag}(\cG_{1,k},\cG_{2,k})$, $h_k\triangleq
[K_{1,\theta_k}g_{1,k}^2 , K_{2,\theta_k}g_{2,k}^2 ]^\top$.

\begin{prop}\label{lem_approx_update}
With the above notation, $
\widetilde\mu_{\theta_{k+1}}
=
\mu_{\theta_k}
-\eta_m D_k g_k
+\eta_m^2 h_k$, 
and therefore $\mu_{\theta_{k+1}}
=
\mu_{\theta_k}
-\eta_m D_k g_k
+\eta_m^2 h_k
+\be_k$.
\end{prop}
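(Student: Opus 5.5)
The plan is to expand the product $w_{j,k+1}v_{j,k+1}$ neuron by neuron, using the explicit gradient formulas derived just before the statement, and then sum over the sign group fixed at iteration $k$. Fix $j\in\sJ_{1,k}$. By the displayed partial derivatives, the update~\eqref{eq_update_rule} gives $w_{j,k+1}=w_{j,k}-\eta_m g_{1,k}v_{j,k}$ and $v_{j,k+1}=v_{j,k}-\eta_m g_{1,k}w_{j,k}$. Both components are driven by the same controller-gradient entry $g_{1,k}$, because membership is decided by the sign of $w_{j,k}$, not of $w_{j,k+1}$.

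Multiplying the two updates gives
\[
w_{j,k+1}v_{j,k+1}=w_{j,k}v_{j,k}-\eta_m g_{1,k}\bigl(w_{j,k}^2+v_{j,k}^2\bigr)+\eta_m^2 g_{1,k}^2\, w_{j,k}v_{j,k}.
\]
Summing over $j\in\sJ_{1,k}$ then does the following:
\begin{itemize}
\item the first term sums to $K_{1,\theta_k}$;
\item the second term sums to $-\eta_m \cG_{1,k} g_{1,k}$;
\item the third term sums to $\eta_m^2 K_{1,\theta_k} g_{1,k}^2$.
\end{itemize}
By definition, the left-hand side sums to $\widetilde K_{1,k+1}$. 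The identical computation for $j\in\sJ_{2,k}$, with $g_{2,k}$ in place of $g_{1,k}$, yields the second coordinate. Stacking the two coordinates gives $\widetilde\mu_{k+1}=\mu_{\theta_k}-\eta_m D_k g_k+\eta_m^2 h_k$, with $D_k$ and $h_k$ exactly as defined.

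The second identity follows from the definition of the crossing error: $\mu_{\theta_{k+1}}=\widetilde\mu_{k+1}+\be_k$, since $\be_k$ is by construction the difference between the true gains at $k+1$, which are summed over $\sJ_{i,k+1}$, and the no-crossing gains, which are summed over $\sJ_{i,k}$.

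There is no real obstacle here; the identity is exact algebra. The only point needing care is consistency of the convention $\sigma'(0)=0$ with the definition $\sJ_{1,k}=\{w_{j,k}\ge 0\}$. A neuron with $w_{j,k}=0$ would, under that convention, receive zero gradient rather than the $g_{1,k}$ update. Its contribution to every term is still zero, however:
\begin{itemize}
\item $w_{j,k}v_{j,k}=0$;
\item its $\cG$-contribution is $v_{j,k}^2$, but it is multiplied by an update that never actually moves $w_{j,k}$.
\end{itemize}
This case would need either the almost-sure exclusion noted in the Remark or the observation that such neurons were omitted. I would state that exclusion explicitly, note that it holds almost surely along the iterates as well as at initialization, and then present the computation above.
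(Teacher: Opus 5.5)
Your argument is the paper's proof: expand $w_{j,k+1}v_{j,k+1}$ for $j\in\sJ_{1,k}$ (resp.\ $\sJ_{2,k}$), sum over the sign group fixed at time $k$, stack the two coordinates, and read off the second identity from the definition of $\be_k$. Your $w_{j,k}=0$ caveat is not needed, because the displayed derivatives use the indicator $\mathbbm{1}_{\{w_{j,k}\ge 0\}}$ and give such a neuron $w_{j,k+1}=-\eta_m g_{1,k}v_{j,k}$ and $v_{j,k+1}=v_{j,k}$, which matches your expansion exactly; under the $\sigma'(0)=0$ convention its contribution is not zero (the $\cG_{1,k}$ term would be off by $\eta_m g_{1,k}v_{j,k}^2$), so there the exclusion would be essential, and your claim that it holds almost surely along all iterates would need a separate argument.
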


\begin{proof}
If \(j\in \sJ_{1,k}\), then $
w_{j,k+1}=w_{j,k}-\eta_m v_{j,k}g_{1,k}$, $v_{j,k+1}=v_{j,k}-\eta_m w_{j,k}g_{1,k}$, so
\[
w_{j,k+1}v_{j,k+1}
=
w_{j,k}v_{j,k}
-\eta_m g_{1,k}(w_{j,k}^2+v_{j,k}^2)
+\eta_m^2 g_{1,k}^2 w_{j,k}v_{j,k}.
\]
Summing over \(j\in \sJ_{1,k}\), and similarly for  \(\sJ_{2,k}\) yields
\begin{align*}
   \widetilde K_{1,k+1}
& =
K_{1,\theta_k}
-\eta_m \cG_{1,k} g_{1,k}
+\eta_m^2 K_{1,\theta_k} g_{1,k}^2 \\
\widetilde K_{2,k+1}
& =
K_{2,\theta_k}
-\eta_m \cG_{2,k} g_{2,k}
+\eta_m^2 K_{2,\theta_k} g_{2,k}^2. 
\end{align*}
Stacking the two identities proves the first equality, and the second follows from the definition of
\(\be_k\).
\end{proof}

\begin{lem}\label{lem_crossing_error}
Assume $\mu_{\theta_k}\in \cK_{\overline\delta}$. If $\eta_mG_{\max}\le 1$, then
$\|\be_k\|_2
\le
2\sqrt{2}\,\eta_m\,\|g_k\|_2\,\Xi_k$.
\end{lem}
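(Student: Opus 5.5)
The plan is a direct neuron-by-neuron bound: write each component of $\be_k$ as a sum over the crossing neurons only, and show that every crossing neuron contributes at most $2\eta_m\|g_k\|_2 v_{j,k}^2$ to each component. Throughout I use Lemma~\ref{lem:Jrho_geometry}(iv): since $\mu_{\theta_k}\in\cK_{\overline\delta}$, we have $|g_{i,k}|\le\|g_k\|_2\le G_{\max}$, and hence $\eta_m|g_{i,k}|\le 1$.

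\emph{Step 1 (only crossing neurons appear).} By definition, $K_{1,\theta_{k+1}}$ sums $w_{j,k+1}v_{j,k+1}$ over $\sJ_{1,k+1}$, while $\widetilde K_{1,k+1}$ sums the same products over $\sJ_{1,k}$. Moreover $\sJ_{1,k+1}=(\sJ_{1,k}\setminus\sH_{1,k})\cup\sH_{2,k}$. Therefore
\[
(\be_k)_1=\sum_{j\in\sH_{2,k}}w_{j,k+1}v_{j,k+1}-\sum_{j\in\sH_{1,k}}w_{j,k+1}v_{j,k+1}.
\]
Symmetrically, $(\be_k)_2$ is the same expression with $\sH_{1,k}$ and $\sH_{2,k}$ swapped. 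Hence
\[
|(\be_k)_i|\le\sum_{j\in\sH_{1,k}\cup\sH_{2,k}}|w_{j,k+1}v_{j,k+1}|,\qquad i=1,2.
\]

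\emph{Step 2 (per-neuron bound).} Take $j\in\sH_{1,k}$; the case $\sH_{2,k}$ is symmetric, with $g_{2,k}$ in place of $g_{1,k}$. The update is $w_{j,k+1}=w_{j,k}-\eta_m v_{j,k}g_{1,k}$. Since $w_{j,k}\ge 0>w_{j,k+1}$, both $|w_{j,k+1}|$ and $|w_{j,k}|$ are bounded by the jump $|w_{j,k+1}-w_{j,k}|=\eta_m|v_{j,k}||g_{1,k}|$.

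This gives two consequences:
\begin{itemize}
\item $|w_{j,k+1}|\le\eta_m\|g_k\|_2|v_{j,k}|$;
\item $|v_{j,k+1}|\le|v_{j,k}|+\eta_m|g_{1,k}||w_{j,k}|\le|v_{j,k}|\bigl(1+\eta_m^2g_{1,k}^2\bigr)\le 2|v_{j,k}|$, using $\eta_m|g_{1,k}|\le 1$.
\end{itemize}
Multiplying the two bounds yields $|w_{j,k+1}v_{j,k+1}|\le 2\eta_m\|g_k\|_2v_{j,k}^2$.

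\emph{Step 3 (sum and combine).} Summing Step 2 over the crossing set gives $|(\be_k)_i|\le 2\eta_m\|g_k\|_2\Xi_k$ for $i=1,2$. Therefore
\[
\|\be_k\|_2\le\sqrt2\max_i|(\be_k)_i|\le 2\sqrt2\,\eta_m\|g_k\|_2\Xi_k .
\]

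The only delicate point is Step 2. A naive bound on $|v_{j,k+1}|$ would involve $|w_{j,k}|$, which is not controlled by $\Xi_k$. The key observation is that a sign crossing forces $|w_{j,k}|$ itself to be at most one gradient step, $\eta_m|v_{j,k}||g_{1,k}|$. Combined with the step-size condition $\eta_mG_{\max}\le1$, this turns the estimate into a clean multiple of $v_{j,k}^2$.
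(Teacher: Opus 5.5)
Your proof is correct and follows the paper's route. You use the same per-neuron argument: a sign crossing forces both $|w_{j,k}|$ and $|w_{j,k+1}|$ below one step $\eta_m|g_{i,k}||v_{j,k}|$, which together with $\eta_m G_{\max}\le 1$ gives $|w_{j,k+1}v_{j,k+1}|\le 2\eta_m\|g_k\|_2 v_{j,k}^2$, and summing over $\sH_{1,k}\cup\sH_{2,k}$ with a $\sqrt{2}$ factor for the two components finishes. Your set identity $\sJ_{1,k+1}=(\sJ_{1,k}\setminus\sH_{1,k})\cup\sH_{2,k}$ is just an explicit justification of the paper's per-neuron contribution $[-w^+v^+,\,w^+v^+]^\top$; it also correctly covers $w_{j,k}=0$, which the paper's ``$w>0$'' glosses over.
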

\begin{proof}
Consider $j\in \sH_{1,k}$ and define $h_k^{(1)} \triangleq \eta_mg_{1,k}$. Write $w=w_{j,k}$,  $v=v_{j,k}$, $w^+=w_{j,k+1}$, and $v^+=v_{j,k+1}$. Then $w>0$, $w^+<0$, and $w^+=w-h_k^{(1)}v$, $v^+=v-h_k^{(1)}w$. Since $w^+<0$, we have $h_k^{(1)}v>w>0$, hence $|w^+|\le |h_k^{(1)}|\,|v|$. Also,
\[
|v^+|
\le |v|+|h_k^{(1)}|\,|w|
\le (1+|h_k^{(1)}|^2)|v|
\le 2|v|,
\]
because $|h_k^{(1)}|
\le \eta_m\|g_k\|_2
\le \eta_mG_{\max}
\le 1$. Therefore,
\[
|w^+v^+|
\le 2|h_k^{(1)}|\,v^2
\le 2\eta_m\|g_k\|_2\,v^2.
\]
The contribution of neuron $j$ to the controller error is
\[
\be_{j,k} {=} [
{-}w^{+}v^+,
w^+v^+]^\top,
\ 
\|\be_{j,k}\|_2{=}\sqrt{2}\,|w^+v^+|
{\le}
2\sqrt{2}\,\eta_m\,|g_{1,k}|\,v_{j,k}^2.
\]
The case $j\in \sH_{2,k}$ is identical with $h_k^{(2)} \triangleq \eta_m g_{2,k}$. Summing over all crossing neurons gives
\[
\|\be_k\|_2
\le
2\sqrt{2}\,\eta_m\,\|g_k\|_2
\sum_{j\in \sH_{1,k}\cup \sH_{2,k}} v_{j,k}^2
=
2\sqrt{2}\,\eta_m\,\|g_k\|_2\,\Xi_k.
\]
\end{proof}

\begin{lem}\label{lem_xi_bound}
Let \(\Xi_s \triangleq \sum_{j\in \sH_{1,s}\cup \sH_{2,s}} v_{j,s}^2\), and suppose the initialization is Gaussian as in Lemma~\ref{lem_initial_gains}. Fix \(B>0\) such that \(e^B B<1\), set $\tau_w \triangleq ({e^B B}/{\sqrt{1-e^{2B}B^2}})V$, and define \(\cS_w,\cS_v,\cM_w,\cM_v\) accordingly. If $\eta_m\sum_{t=0}^{k}\|g_t\|_2 \le B$,
then for every \(s=0,\dots,k+1\) and every neuron \(j\),
\[
r_{j,s}\triangleq \sqrt{w_{j,s}^2+v_{j,s}^2}\le e^B r_{j,0},
\quad
|w_{j,s}-w_{j,0}|\le e^BBr_{j,0}.
\]
Moreover, for $s=0,\dots,k$, $\sH_{1,s}\cup \sH_{2,s}\subseteq \cS_w\cup \cS_v$, and therefore, on \(\cE_\Xi\), $\Xi_s\le \Xi_\ast \triangleq e^{2B}(M_w+M_v)$.
\end{lem}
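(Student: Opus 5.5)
The plan is to track each neuron separately through the linear recursion that the policy-gradient step \eqref{eq_update_rule} induces on the pair $(w_{j,s},v_{j,s})$. From the partial derivatives computed before Proposition~\ref{lem_approx_update},
\[
\begin{bmatrix} w_{j,s+1}\\ v_{j,s+1}\end{bmatrix}
=
\begin{bmatrix} 1 & -h_{j,s}\\ -h_{j,s} & 1\end{bmatrix}
\begin{bmatrix} w_{j,s}\\ v_{j,s}\end{bmatrix},
\]
where $h_{j,s}=\eta_m g_{1,s}$ if $j\in\sJ_{1,s}$ and $h_{j,s}=\eta_m g_{2,s}$ otherwise. In either case $|h_{j,s}|\le \eta_m\|g_s\|_2$. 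Note that this recursion, and everything below, is deterministic; the initialization enters only at the last step.

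\textbf{Step 1: norm growth.} The matrix above is symmetric with eigenvalues $1\pm h_{j,s}$, so its operator norm is $1+|h_{j,s}|\le e^{|h_{j,s}|}$. Hence $r_{j,s+1}\le e^{\eta_m\|g_s\|_2}\,r_{j,s}$. Iterating and using the hypothesis $\eta_m\sum_{t=0}^k\|g_t\|_2\le B$ gives $r_{j,s}\le e^{B}r_{j,0}$ for all $s\le k+1$.

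\textbf{Step 2: drift of $w$.} From the first row of the recursion, $|w_{j,t+1}-w_{j,t}|=|h_{j,t}|\,|v_{j,t}|\le \eta_m\|g_t\|_2\,e^{B}r_{j,0}$, where the last inequality uses Step~1. Telescoping over $t=0,\dots,s-1$ and applying the hypothesis again yields $|w_{j,s}-w_{j,0}|\le e^{B}B\,r_{j,0}$ for every $s\le k+1$.

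\textbf{Step 3: crossing neurons are bad neurons.} Let $j\in\sH_{1,s}\cup\sH_{2,s}$ with $s\le k$, so that $w_{j,s}$ and $w_{j,s+1}$ lie on opposite sides of the origin, with $w_{j,s}=0$ allowed. Since both $s$ and $s+1$ are at most $k+1$, Step~2 applies to each. Because zero lies between $w_{j,s}$ and $w_{j,s+1}$, we get $|w_{j,0}|\le\max\{|w_{j,s}-w_{j,0}|,|w_{j,s+1}-w_{j,0}|\}\le c\,r_{j,0}$, where $c\triangleq e^{B}B<1$. Suppose in addition that $j\notin\cS_v$, so $|v_{j,0}|\le V/\sqrt m$. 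Squaring $|w_{j,0}|\le c\,r_{j,0}$ gives $(1-c^2)w_{j,0}^2\le c^2v_{j,0}^2$, hence
\[
|w_{j,0}|\le \frac{c}{\sqrt{1-c^2}}\,|v_{j,0}|\le \frac{e^{B}B}{\sqrt{1-e^{2B}B^2}}\,\frac{V}{\sqrt m}=\frac{\tau_w}{\sqrt m}.
\]
This means $j\in\cS_w$. Therefore $\sH_{1,s}\cup\sH_{2,s}\subseteq\cS_w\cup\cS_v$. The only point needing care here is that the threshold $\tau_w$ in the statement is exactly this algebraic consequence of $c<1$; the rest is routine.

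\textbf{Step 4: bounding $\Xi_s$.} Using Steps~1 and~3,
\[
\Xi_s\le \sum_{j\in\sH_{1,s}\cup\sH_{2,s}} r_{j,s}^2\le e^{2B}\!\!\sum_{j\in\cS_w\cup\cS_v}\! r_{j,0}^2\le e^{2B}(\cM_w+\cM_v).
\]
On $\cE_\Xi$ the right-hand side is at most $e^{2B}(M_w+M_v)=\Xi_\ast$, which is the claimed bound.
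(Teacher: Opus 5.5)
Your proof is correct and follows the paper's argument essentially step for step. It uses the same per-neuron linear recursion with $\|A(h)\|_2=1+|h|$, the same telescoping drift bound for $w_{j,s}$, and the same mass estimate through $\cM_w+\cM_v$. The only difference is in Step~3: you prove the inclusion $\sH_{1,s}\cup\sH_{2,s}\subseteq\cS_w\cup\cS_v$ by contrapositive (a crossing neuron with $j\notin\cS_v$ must satisfy $|w_{j,0}|\le e^BBr_{j,0}$, hence $j\in\cS_w$). The paper argues directly that $|w_{j,0}|>e^BBr_{j,0}$ rules out any sign change, which is equivalent, and your explicit treatment of the endpoint case $w_{j,s}=0$ adds a little care the paper leaves implicit.
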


\begin{proof}
Let $g_t \triangleq \nabla_\mu \cJ_\rho(\mu_{\theta_t})$ and
$r_{j,s}\triangleq \sqrt{w_{j,s}^2+v_{j,s}^2}$. Writing $\bz_{j,s}\triangleq
[w_{j,s},v_{j,s}]^\top$, the neuron update has the form $\bz_{j,s+1}=A(h_{i,s})\bz_{j,s}$, $A(h)=
\begin{bmatrix}
1&-h\\
-h&1
\end{bmatrix}$, $h_{i,s}= \eta_mg_{i,s}$. Since \(\|A(h)\|_2=1+|h|\), we obtain
$r_{j,s+1}
\le
\left(1+\eta_m\|g_s\|_2\right)r_{j,s}$.
Iterating gives $r_{j,s}\le
\exp\!\left(\eta_m\sum_{t=0}^{k}\|g_t\|_2\right)r_{j,0}
\le e^B r_{j,0}$, for $s=0,\dots,k+1$. Also,
\[
|w_{j,s}-w_{j,0}|
\le
\sum_{t=0}^{s-1} \eta_m\|g_t\|_2\,|v_{j,t}|
\le
e^BBr_{j,0}.
\]

Now take \(j\notin \cS_w\cup \cS_v\). Then \(|w_{j,0}|>\tau_w/\sqrt m\) and \(|v_{j,0}|\le V/\sqrt m\), so $r_{j,0}\le \sqrt{w_{j,0}^2+{V^2}/{m}}$. By the definition of \(\tau_w\), this implies \(|w_{j,0}|>e^BB\,r_{j,0}\). Hence \(w_{j,s}\) cannot cross zero for any \(s\le k+1\), which proves $\sH_{1,s}\cup \sH_{2,s}\subseteq \cS_w\cup \cS_v$, for $s=0,\dots,k$. Finally, on \(\cE_\Xi\),
\begin{align*}
\Xi_s
&= \sum_{j\in \sH_{1,s}\cup \sH_{2,s}} v_{j,s}^2
\le \sum_{j\in \cS_w\cup \cS_v} r_{j,s}^2
\le e^{2B}\sum_{j\in \cS_w\cup \cS_v} r_{j,0}^2 \\
&\le e^{2B}(\cM_w+\cM_v)
\le e^{2B}(M_w+M_v)
\triangleq \Xi_\ast .
\end{align*}
\end{proof}

Proposition~\ref {lem_approx_update} shows that the contribution of the neurons in $\sJ_{i,k}$ is dominant in the effective gains at $\sJ_{i, k + 1}$. It characterizes the error arising from neurons that change sign. \lemname \ref{lem_crossing_error} bounds the norm of $\be_k$ and \lemname \ref{lem_xi_bound} bounds the potential crossing mass and shows the crossing neurons belong to the set of potentially problematic neurons.

\section{Benign Regime and Theorem~\ref{thm:main_pg} Proof}
\label{subsec:proof_main_result}

We now combine the controller-level geometry of $\cJ_\rho$ with the weight-level sign-stability
estimates into a single convergence statement. The main theorem is stated on a good initialization
event and under a benign width regime.

Recall the constants $\alpha_\mu>0$ and $C_{\mathrm{grad}}>0$. Fix $0<s<S<\infty$, $V>0$, and $0<\varepsilon<1$, and specialize the initialization variance to $\alpha \triangleq \beta^{-2}$. Define $\Delta_{\max}\triangleq \sigma_\rho^2 P_{\max}-\cJ_\rho(\mu^\star)$. Thus, Lemma~\ref{lem_state_value_function} implies $\Delta_0=\cJ_\rho(\mu_{\theta_0})-\cJ_\rho(\mu^\star)\le \Delta_{\max}$. Set 
\begin{align*}
  \tau  & \triangleq s\sqrt{\beta}, \ W\triangleq S\sqrt{\beta}, \  \ell_{s,S}\triangleq {s^2(\Phi(S)-\Phi(s))}/{4},  \\
      c &\triangleq (1{-}\varepsilon)\bigl(\Phi(S){-}\Phi(s)\bigr)(2\Phi({V}\beta){-}1),\ \lambda_\ast\triangleq {c\tau^2}/{4}, \\
    \rho_m &\triangleq 1-\eta_m\lambda_\ast\alpha_\mu, \ B\triangleq {2\sqrt{C_{\mathrm{grad}}\Delta_{\max}}}/{(\lambda_\ast\alpha_\mu)}, \\
     R_c &\triangleq \sqrt{W^2+V^2},  \ R_\ast^2\triangleq (1+\varepsilon)(\alpha+\beta),  \ G_\ast\triangleq e^{2B}R_\ast^2, \\
\ L_\ast &\triangleq ({3}/{2})G_\ast+2\sqrt{2}\,\Xi_\ast, \ C_\ast\triangleq ({1}/{2})G_\ast G_{\max}+ ({L_\mu}/{2})L_\ast^2.
\end{align*}

We bundle the high-probability initialization properties into the event
\[
\mathcal E_m \triangleq
\Bigl\{
\mu_{\theta_0}\in \cK_\delta,\;
|\cC_{1,2}|\ge cm,\;
\sum_{j=1}^m (w_{j,0}^2+v_{j,0}^2)\le R_\ast^2,\;
\cE_\Xi
\Bigr\}.
\]
The event $\cE_m$ is the intersection of finitely many initialization events. By Lemmas~\ref{lem_initial_gains}, \ref{lem_bounded_norm}, \ref{lem_many_neurons}, and \ref{lem_bad_mass_event}, each constituent event fails with probability at most
$C_i e^{-c_i m}$ for some constants $C_i,c_i>0$ independent of $m$. Therefore, by a union bound,
there exist constants $C_0>0$ and $m_0\in\mathbb N$ such that, for all $m\ge m_0$, $\P(\mathcal E_m)\ge 1-e^{-C_0 m}$.

\begin{definition}\label{def:benign}
    The tuple $(\beta,m)$ lies in the \emph{benign width regime} if $
\eta_mG_{\max}\le 1$, $\eta_m\lambda_\ast\alpha_\mu\le 1$, $e^BB<1$, $e^BBR_c\le {\tau}/{2}$,
$2\sqrt{2}\,\Xi_\ast\le  {\lambda_\ast}/{4}$, $\eta_mC_\ast\le  {\lambda_\ast}/{4}$, and $|b|L_\ast B\le  {\delta}/{2}$.
\end{definition}

The next proposition shows that the benign width regime is nonempty and can be enforced by first
choosing $\beta$ large enough and then taking $m$ sufficiently large.

\begin{prop}
\label{prop:benign_width_choice}
Assume
\begin{equation}
\label{eq:controller_regime}
\frac{3|b|(1+\varepsilon)L_\mu\sqrt{\Delta_{\max}}}
{(1-\varepsilon)\ell_{s,S}\alpha_\mu\sqrt{(r+\gamma b^2P^\star)(\sigma_\rho^2/2)}}
<
\frac{\delta}{2}.
\end{equation}
Moreover, assume the deterministic thresholds \(M_w(\beta)\) and \(M_v(\beta)\) are chosen so that
$M_w(\beta)>\E[\cM_w]$, $M_v(\beta)>\E[\cM_v]$, and $M_w(\beta)+M_v(\beta)\to 0$ as \(\beta\to\infty\). Then there exists \(\beta_0>0\) such that, for every fixed \(\beta\ge \beta_0\), all \(\beta\)-dependent inequalities in the benign width regime hold. Furthermore, if
\[
m\ge m_0(\beta)
\triangleq
\left\lceil
\iota\max\left\{
G_{\max},
\lambda_\ast\alpha_\mu,
{4C_\ast}/{\lambda_\ast}
\right\}
\right\rceil,
\]
then the remaining width-dependent inequalities also hold. Consequently, for every fixed \(\beta\ge \beta_0\) and every \(m\ge m_0(\beta)\), all deterministic assumptions of Theorem~\ref{thm:main_pg} are satisfied, and the high-probability bound for \(\mathcal E_m\) follows from the union-bound estimate above.
\end{prop}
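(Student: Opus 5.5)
The plan is to treat the benign-regime inequalities of Definition~\ref{def:benign} in two groups. The first group depends only on $\beta$: $e^BB<1$, $e^BBR_c\le\tau/2$, $2\sqrt2\,\Xi_\ast\le\lambda_\ast/4$, and $|b|L_\ast B\le\delta/2$. The second group involves $\eta_m=\iota/m$: $\eta_mG_{\max}\le1$, $\eta_m\lambda_\ast\alpha_\mu\le1$, and $\eta_mC_\ast\le\lambda_\ast/4$. For the first group I will record the asymptotics in $\beta$ of every auxiliary constant, using that $L_\mu,\alpha_\mu,C_{\mathrm{grad}},G_{\max},\Delta_{\max}$ come from Lemmas~\ref{lem_state_value_function} and~\ref{lem:Jrho_geometry} and so do not depend on $\beta$ or $m$.

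\textbf{Scaling of the constants in $\beta$.} Write $c_\infty\triangleq(1-\varepsilon)(\Phi(S)-\Phi(s))$. Since $2\Phi(V\beta)-1\to1$, we have $c\uparrow c_\infty$. With $\tau=s\sqrt\beta$ this gives $\lambda_\ast=cs^2\beta/4$, and hence $\lambda_\ast/\beta\to(1-\varepsilon)\ell_{s,S}$. Consequently
\[
B=\frac{2\sqrt{C_{\mathrm{grad}}\Delta_{\max}}}{\lambda_\ast\alpha_\mu}=O(\beta^{-1}),
\qquad
\beta B\to \frac{2L_\mu\sqrt{\Delta_{\max}}}{(1-\varepsilon)\ell_{s,S}\alpha_\mu\sqrt{(r+\gamma b^2P^\star)(\sigma_\rho^2/2)}},
\]
where I substituted $C_{\mathrm{grad}}=L_\mu^2/((r+\gamma b^2P^\star)(\sigma_\rho^2/2))$. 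The remaining constants scale as follows.
\begin{itemize}
\item $R_c=\sqrt{S^2\beta+V^2}=O(\sqrt\beta)$.
\item $R_\ast^2=(1+\varepsilon)(\beta^{-2}+\beta)$.
\item $\Xi_\ast=e^{2B}(M_w+M_v)\to0$, by the hypothesis on the thresholds.
\item $G_\ast=e^{2B}R_\ast^2\sim(1+\varepsilon)\beta$, and therefore $L_\ast\sim\tfrac32(1+\varepsilon)\beta$.
\end{itemize}

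\textbf{Checking the $\beta$-only conditions.} Three of them are now immediate for large $\beta$.
\begin{itemize}
\item $e^BB<1$ holds because $B\to0$.
\item $e^BBR_c=O(\beta^{-1/2})$, while $\tau/2=s\sqrt\beta/2\to\infty$.
\item $2\sqrt2\,\Xi_\ast\to0$, while $\lambda_\ast/4\to\infty$.
\end{itemize}
The delicate condition, and the main obstacle, is $|b|L_\ast B\le\delta/2$, because $L_\ast$ grows like $\beta$ and $B$ decays like $\beta^{-1}$, so the product tends to a finite nonzero limit. Multiplying the two limits above gives
\[
|b|L_\ast B\;\longrightarrow\;\frac{3|b|(1+\varepsilon)L_\mu\sqrt{\Delta_{\max}}}{(1-\varepsilon)\ell_{s,S}\alpha_\mu\sqrt{(r+\gamma b^2P^\star)(\sigma_\rho^2/2)}},
\]
which is exactly the left-hand side of \eqref{eq:controller_regime}. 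The strict inequality in \eqref{eq:controller_regime} supplies a positive margin. I will make this limit rigorous by bounding the correction factors explicitly: $e^{2B}\to1$, the $\beta^{-2}$ term in $R_\ast^2$, the $2\sqrt2\,\Xi_\ast$ term in $L_\ast$, and $c/c_\infty\to1$ through $\Phi(V\beta)\to1$. For $\beta$ large each of these perturbs the limit by less than the margin. Taking $\beta_0$ as the maximum of the four thresholds gives all $\beta$-dependent inequalities for every $\beta\ge\beta_0$.

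\textbf{Width conditions and conclusion.} For fixed $\beta\ge\beta_0$, the quantities $G_{\max},\lambda_\ast,\alpha_\mu,C_\ast$ are fixed numbers independent of $m$. Each remaining condition is linear in $\eta_m=\iota/m$ and is equivalent to a lower bound on $m$: $m\ge\iota G_{\max}$, $m\ge\iota\lambda_\ast\alpha_\mu$, and $m\ge4\iota C_\ast/\lambda_\ast$. The choice of $m_0(\beta)$ as the ceiling of their maximum therefore ensures all three hold. Finally, Lemmas~\ref{lem_initial_gains}, \ref{lem_bounded_norm}, \ref{lem_many_neurons}, and~\ref{lem_bad_mass_event} apply with $\alpha=\beta^{-2}$ fixed. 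Their hypotheses hold as follows:
\begin{itemize}
\item $R_\ast^2>\alpha+\beta$ because $\varepsilon>0$;
\item $c<p$ because of the factor $1-\varepsilon$, with $W/\sqrt\beta=S$, $\tau/\sqrt\beta=s$ and $V/\sqrt\alpha=V\beta$;
\item $M_w,M_v$ exceed their expectations by assumption.
\end{itemize}
The union-bound estimate then gives $\P(\mathcal E_m)\ge1-e^{-C_0m}$ after possibly enlarging $m_0$ to absorb the prefactors.
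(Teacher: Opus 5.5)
Your proposal is correct and follows essentially the same route as the paper. You split the benign-regime conditions into the $\beta$-only ones, using $B=O(\beta^{-1})$, $G_\ast\sim(1+\varepsilon)\beta$, $\Xi_\ast\to 0$, and identifying the limit of $|b|L_\ast B$ with the left-hand side of \eqref{eq:controller_regime}; the three step-size conditions then reduce to $m\ge m_0(\beta)$, exactly as in the paper. Your bookkeeping is slightly cleaner than the paper's: you keep the factor $2\Phi(V\beta)-1$ in $c$, so $\lambda_\ast/\beta\to(1-\varepsilon)\ell_{s,S}$ holds only in the limit, and you use $R_c=O(\sqrt{\beta})$, giving $e^BBR_c=O(\beta^{-1/2})$, whereas the paper writes $R_c=O(\beta^{-1/2})$.
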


\begin{proof}
Choose any fixed $\varepsilon \in (0, 1)$ and set $c = (1 - \varepsilon) (\Phi(S) - \Phi(s))$ (from \lemname \ref{lem_many_neurons}). For all sufficiently large~$\beta$, since $\alpha=\beta^{-2}$, $p \to \Phi(S) - \Phi(s)$ as $\beta \to \infty$. Likewise, $c < p$, $ \lambda_{\ast} = (1-\varepsilon)\ell_{s,S}\beta $, and $B = O(\beta^{-1}) \to 0$. Similarly, $e^B B <1$, $R_c = O(\beta^{-1/2})$, and $(\tau / 2) \to \infty$. It implies that $ e^B B R_c < \tau / 2 $. By definion of $\tau_w$ in \lemname \ref{lem_xi_bound}, $\tau_w = O(B) = O(\beta^{-1})$ and $\tau_w/\sqrt{\beta} = O (\beta^{-3/2})$.

Let $Z,U \sim \cN(0, 1)$ be independent. Rewrite $w_{j,0} = \sqrt{\beta /m} \cdot Z$ and $v_{j,0} = \sqrt{\alpha / m} \cdot U$, such that
\begin{gather}
\cM_w = (\beta Z^2 + \alpha U^2)\mathbbm{1}_{ \left\{ |Z| \le \tau/\sqrt{\beta} \right\}}, \\
\cM_v = (\beta Z^2+ \alpha U^2)\mathbbm{1}_{ \left\{ |U| > V / \sqrt{\alpha} \right\} }.
\end{gather}
Then, one gets $\E \left[ \cM_w \right] \to 0$ and $\E \left[ \cM_v \right] \to 0$. Set $M_w = 2 \E \left[ \cM_w \right]$ and $M_v = 2 \E \left[ \cM_v \right]$, then $\Xi_{\ast} \to 0$. Likewise, $2 \sqrt{2} \Xi_{\ast} \le \lambda_{\ast} / 4$ holds. In addition, $R_{\ast}^2 \sim (1 + \epsilon) \beta$ and $G_{\ast} \sim (1 + \epsilon) \beta$. Finally,
\begin{multline}
|b|L_\ast B
=
|b|\Bigl(\frac32G_\ast+2\sqrt2\,\Xi_\ast\Bigr)B
\\
\longrightarrow
\frac{3|b|(1+\varepsilon)L_\mu\sqrt{\Delta_{\max}}}
{(1-\varepsilon)\ell_{s,S}\alpha_\mu\sqrt{(r+\gamma b^2P^\star)(\sigma_\rho^2/2)}},
\end{multline}
so \eqref{eq:controller_regime} implies that \(|b|L_\ast B<\delta/2\) for all sufficiently large~\(\beta\). This proves all \(\beta\)-dependent parts of the benign-width regime. Once \(\beta\) is fixed, the three remaining width-dependent inequalities follow from the choice \(m\ge m_0(\beta)\).
\end{proof}

We are now ready to prove \thmname \ref{thm:main_pg}.

\begin{proof}[Main \thmname \ref{thm:main_pg}]
Let $\mu_k \triangleq \mu_{\theta_k}$, $\Delta_k \triangleq \cJ_\rho(\mu_k)-\cJ_\rho(\mu^\star)$. Since \(C_\ast\) depends only on the fixed parameters once~\(\beta\) is fixed, and by hypothesis~\(m\) is sufficiently large, $
\eta_m C_\ast \le {\lambda_\ast}/{4}$ also holds. We prove by induction that, on the event \(\mathcal E_m\), for every \(t=0,1,\dots,k\),
\begin{small}
    \begin{equation} \label{eq:main_induction_1}
        \begin{gathered}
            \mu_t\in \cK_{\overline\delta}, \;\; \sum_{j=1}^{m}(w_{j,t}^2+v_{j,t}^2)\le G_\ast, \;\; \Xi_t\le \Xi_\ast, \;\; \Delta_t\le \rho_m^t\Delta_0, \\
            w_{j,t}\ge {\tau}/{(2\sqrt m)}\ \ \forall j \in \cC_1, \;\; w_{j,t}\le -{\tau}/{(2\sqrt m)}\ \ \forall j\in \cC_2.
        \end{gathered}
    \end{equation}
\end{small}
For \(k=0\), the claims \(\mu_0\in \cK_{\overline\delta}\), \(|\cC_1|\ge cm\), \(|\cC_2|\ge cm\), and \(\sum_{j=1}^m (w_{j,0}^2+v_{j,0}^2)\le R_\ast^2\le G_\ast\) follow from the definition of~\(\mathcal E_m\). Also, \(\Delta_0\le \Delta_{\max}\), so $\eta_m\|g_0\|_2
\le
\eta_m\sqrt{C_{\mathrm{grad}}\Delta_{\max}}
\le B.$

Therefore, Lemma~\ref{lem_xi_bound} with \(k=0\) and the event \(\cE_\Xi\subset \mathcal E_m\) yields \(\Xi_0\le \Xi_\ast\) establishes the base case.

Assume now that \eqref{eq:main_induction_1} holds up to time \(k\), and
we prove it holds also at time \(k+1\). Since \(\Delta_t\le \rho_m^t\Delta_0\le \rho_m^t\Delta_{\max}\) for all \(t\le k\), Lemma~\ref{lem:Jrho_geometry}
gives $\|g_t\|_2^2
\le
C_{\mathrm{grad}}\Delta_t
\le
C_{\mathrm{grad}}\Delta_{\max}\rho_m^t$. Hence
\[
\eta_m\sum_{t=0}^{k}\|g_t\|_2
\le
\eta_m\sqrt{C_{\mathrm{grad}}\Delta_{\max}}
\sum_{t=0}^{\infty}\rho_m^{t/2}.
\]
Since \(\rho_m=1-\eta_m\lambda_\ast\alpha_\mu\) and \(\eta_m\lambda_\ast\alpha_\mu\le 1\),
\[
\sum_{t=0}^{\infty}\rho_m^{t/2}
=
{1}/{(1-\sqrt{\rho_m})}
\le
{2}/{(\eta_m\lambda_\ast\alpha_\mu)}.
\]
Therefore, $
\eta_m\sum_{t=0}^{k}\|g_t\|_2
\le
{2\sqrt{C_{\mathrm{grad}}\Delta_{\max}}}/{(\lambda_\ast\alpha_\mu)}
=
B$.

We now invoke Lemma~\ref{lem_xi_bound}. Taking \(s=k+1\), we obtain $r_{j,k+1}\le e^B r_{j,0}$, for all $j$. Summing over \(j\) gives
\[
\sum_{j=1}^{m}(w_{j,k+1}^2+v_{j,k+1}^2)
\le
e^{2B}\sum_{j=1}^{m}(w_{j,0}^2+v_{j,0}^2)
\le
e^{2B}R_\ast^2
=
G_\ast.
\]
Also, if \(j\in \cC_1\cup \cC_2\), then \(r_{j,0}\le R_c/\sqrt m\), and thus
\[
|w_{j,k+1}-w_{j,0}|
\le
e^BBr_{j,0}
\le
{e^B B R_c}/{\sqrt m}
\le
{\tau}/{(2\sqrt m)},
\]
where the last inequality uses the benign-width condition \(e^BBR_c\le \tau/2\). Therefore,
for every \(j\in \cC_1\), $w_{j,k+1}\ge {\tau}/{(2\sqrt m)}$, and for every \(j\in \cC_2\),
$w_{j,k+1}\le - {\tau}/{(2\sqrt m)}$. So the backbone signs are preserved at time \(k+1\), and \(\Xi_{k+1}\le \Xi_\ast\) also holds.

Because all neurons in \(\cC_1\) remain in the positive group and all neurons in \(\cC_2\) remain in the negative group, the controller-side masses satisfy
$\cG_{1,k}\ge \sum_{j\in \cC_1}w_{j,k}^2
\ge
cm\cdot {\tau^2}/{(4m)}
=
{c\tau^2}/{4}
=
\lambda_\ast,
$
and similarly \(\cG_{2,k}\ge \lambda_\ast\). On the other hand,
\(\cG_{1,k},\cG_{2,k}\le G_\ast\) by the total-mass bound.

Define $D_k\triangleq \text{diag}(\cG_{1,k},\cG_{2,k})$, $h_k\triangleq
[K_{1,\theta_k}g_{1,k}^2,
K_{2,\theta_k}g_{2,k}^2]^\top$. By Proposition~\ref{lem_approx_update}, $
\mu_{k+1}-\mu_k
=
-\eta_m D_k g_k+\eta_m^2 h_k+\be_k$.

Moreover, by \(2|wv|\le w^2+v^2\), $|K_{1,\theta_k}|
\le
({1}/{2})\cG_{1,k}$, $|K_{2,\theta_k}|
\le
({1}/{2})\cG_{2,k}$, hence $\|h_k\|_2\le ({G_\ast}/{2})\|g_k\|_2^2$. By Lemma~\ref{lem_crossing_error},
\[
\|\be_k\|_2
\le
2\sqrt{2}\,\eta_m\,\Xi_k\,\|g_k\|_2
\le
2\sqrt{2}\,\eta_m\,\Xi_\ast\,\|g_k\|_2.
\]
Also, since \(\eta_m G_{\max}\le 1\),
\[
\eta_m^2\|h_k\|_2
\le
\eta_m^2 ({G_\ast}/{2})\|g_k\|_2^2
\le
\eta_m ({G_\ast}/{2})\|g_k\|_2.
\]
Therefore,
\[
\|\mu_{k+1}-\mu_k\|_2
\le
\eta_m\Bigl(({3}/{2})G_\ast+2\sqrt{2}\,\Xi_\ast\Bigr)\|g_k\|_2
=
\eta_m L_\ast \|g_k\|_2.
\]
Summing from \(t=0\) to \(k\), we get
\[
\|\mu_{k+1}-\mu_0\|_2
\le
\eta_m L_\ast \sum_{t=0}^{k}\|g_t\|_2
\le
L_\ast B.
\]
Since \(\mu_0\in \cK_\delta\), each component satisfies $
|a+bK_{i,\theta_0}|\le 1-\delta$, for $i=1,2$. Hence
\begin{align*}
    |a+bK_{i,\theta_{k+1}}|
\le
|a+bK_{i,\theta_0}|+|b|\,|K_{i,\theta_{k+1}}-K_{i,\theta_0}| \\
\le 
1-\delta+|b|L_\ast B 
\le
1-{\delta}/{2}
=
1-\overline\delta,
\end{align*}
where the last step uses the benign-width condition
\[
|b|L_\ast B
=
|b|\Bigl(({3}/{2})G_\ast+2\sqrt{2}\,\Xi_\ast\Bigr)B
\le
{\delta}/{2}.
\]
Thus \(\mu_{k+1}\in \cK_{\overline\delta}\). It remains to prove the descent of the cost gap. Since \(\mu_k,\mu_{k+1}\in \cK_{\overline\delta}\),
the \(L_\mu\)-smoothness of \(\cJ_\rho\) on \(\cK_{\overline\delta}\) yields
$
\Delta_{k+1}
\le
\Delta_k+\langle g_k,\mu_{k+1}-\mu_k\rangle+ ({L_\mu}/{2})\|\mu_{k+1}-\mu_k\|_2^2$. Using the decomposition of \(\mu_{k+1}-\mu_k\),
\[
\langle g_k,\mu_{k+1}-\mu_k\rangle
=
-\eta_m g_k^\top D_k g_k+\eta_m^2 \langle g_k,h_k\rangle+\langle g_k,\be_k\rangle.
\]
Since \(D_k\succeq \lambda_\ast I\), $g_k^\top D_k g_k\ge \lambda_\ast \|g_k\|_2^2$. Also,
\[
\eta_m^2\langle g_k,h_k\rangle
\le
\eta_m^2 \|g_k\|_2\|h_k\|_2
\le
\eta_m^2 ({G_\ast}/{2})G_{\max}\|g_k\|_2^2,
\]
\[
\langle g_k,\be_k\rangle
\le
\|g_k\|_2\|\be_k\|_2
\le
2\sqrt{2}\,\eta_m\,\Xi_\ast\,\|g_k\|_2^2,
\]
and $
({L_\mu}/{2})\|\mu_{k+1}-\mu_k\|_2^2
\le
({L_\mu}/{2})\eta_m^2 L_\ast^2 \|g_k\|_2^2$. 
Combining these bounds gives $
\Delta_{k+1}
\le
\Delta_k
-
\eta_m(
\lambda_\ast
-
2\sqrt{2}\,\Xi_\ast
-
\eta_m C_\ast
)\|g_k\|_2^2$. By construction \(\eta_m C_\ast\le \lambda_\ast/4\), and by the benign-width condition
\(2\sqrt{2}\,\Xi_\ast\le \lambda_\ast/4\), we obtain $\Delta_{k+1}
\le
\Delta_k-({\eta_m\lambda_\ast}/{2})\|g_k\|_2^2$. Now Lemma~\ref{lem:Jrho_geometry} gives the PL inequality $({1}/{2})\|g_k\|_2^2\ge \alpha_\mu \Delta_k$, hence $\Delta_{k+1}
\le
\Delta_k-\eta_m\lambda_\ast\alpha_\mu\,\Delta_k
=
\rho_m\Delta_k$. Therefore \(\Delta_{k+1}\le \rho_m^{k+1}\Delta_0\), so the induction closes.

We have proved that, on \(\mathcal E_m\), \(\mu_k\in \cK_{\overline\delta}\) and
\(\Delta_k\le \rho_m^k\Delta_0\) for every \(k\ge 0\). Finally, the desired result follows the quadratic-growth bound in
Lemma~\ref{lem:Jrho_geometry}.
\end{proof}

\section{Numerical Analysis} \label{sec_num_simulations}

\begin{figure}[tb!]
    \centering
    \begin{subfigure}[b]{0.49\linewidth}
         \centering
         \includegraphics[width=\linewidth]{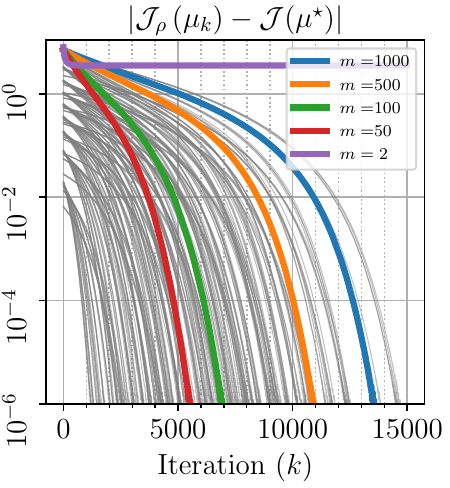}
         \vspace{-0.6cm}
     \end{subfigure}
     \hfill
     \begin{subfigure}[b]{0.49\linewidth}
         \centering
         \includegraphics[width=\linewidth]{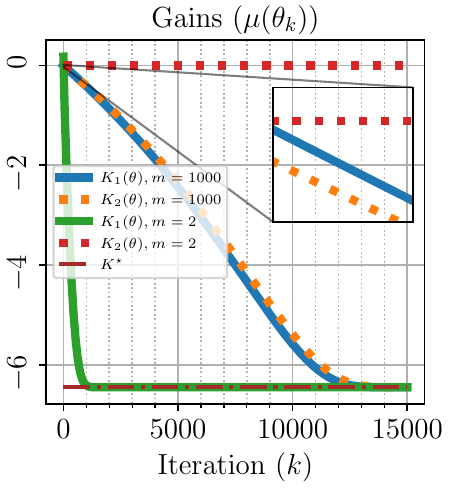}
         \vspace{-0.6cm}
     \end{subfigure}
     \vspace{-0.3cm}
     \caption{(Left) Difference with respect to the cost of the optimal controller. (Right) Evolution of the values for the control gains. \vspace{-0.5cm}}
     \label{fig_cost_gains}
\end{figure}

To illustrate the proposed analysis, consider the base case of a scalar system with parameters $a=0.9$ and $b=0.1$, cost coefficients $q=2.0$ and $r=0.01$, and discount factor $\gamma=0.98$. The initial state is distributed as $x_0\sim \cN(0,1)$, and under this choice $\cJ_{\rho}(\mu_\theta)={(P_{1,\theta}+P_{2,\theta})}/{2}$. We set $\beta=10$, $\alpha=\beta^{-2}$, and use the ReLU controller \eqref{eq_neural_cntr_def} with width $m=1000$ and stepsize $\iota=0.01$. The initial weights $\theta_0$ are sampled as in Lemma~\ref{lem_initial_gains}. At each iteration, we compute the induced gains $(K_{1,\theta_k}, K_{2,\theta_k})$ and the corresponding cost by a forward-propagation step, and then update the weights $\theta_k$ using a backpropagation step.

\figurename \ref{fig_cost_gains} (Left) shows the evolution of the controller cost together with the optimal cost, computed from the scalar discounted Riccati equation and used only as a benchmark. During training, the ReLU controller updates its weights using the system and cost parameters, without access to the optimal solution itself. The figure shows that the controller cost decreases steadily and approaches the optimal value. The initial progress is slow, which is consistent with the small initialization of the weights $w_j$ and $v_j$ and the resulting small gradients discussed in Section~\ref{sec_nn_landscape}. Near the end of training, the remaining fluctuations are small. Color lines represent controllers with different numbers of neurons; the remaining parameters follow the base case. Note that the purple line represents an edge case with a bad initialization, as discussed in \examplename \ref{exm_bad_ini}. Note that even when the network is structurally sufficient to represent the optimal linear controller, a poor initialization can drive the solution to a suboptimal controller. Likewise, gray lines represent systems with different values of $a$ and $b$. Namely, consider $\sA = \{ -0.9, -0.8, -0.7, \dots, 0.7, 0.8, 0.9 \}$ and $\sB = \{-0.45, -0.4, -0.35, \dots, 0.35, 0.4, 0.45 \}$, in both cases $0$ is excluded, and the different systems for the simulation are given by $\sA \times \sB$.
\figurename \ref{fig_cost_gains} (Right) shows the evolution of the effective gains. From the first iterations, both gains in the base case converge to the optimal value $K^\star$. Their initial values are close to zero, as expected from the initialization in \lemname \ref{lem_initial_gains}. Although the two gains appear visually indistinguishable early in training, they continue to evolve and settle near $K^\star$ after approximately $12{,}500$ iterations. This behavior is consistent with the ReLU controller converging to the optimal LQR solution. On the other hand, for the edge case with $m = 2$, the controller is only able to recover the gain $K_{1,\theta}$, as discussed in \examplename \ref{exm_bad_ini}.
\begin{figure}[tb!]
    \centering
    \includegraphics[width=0.99\linewidth]{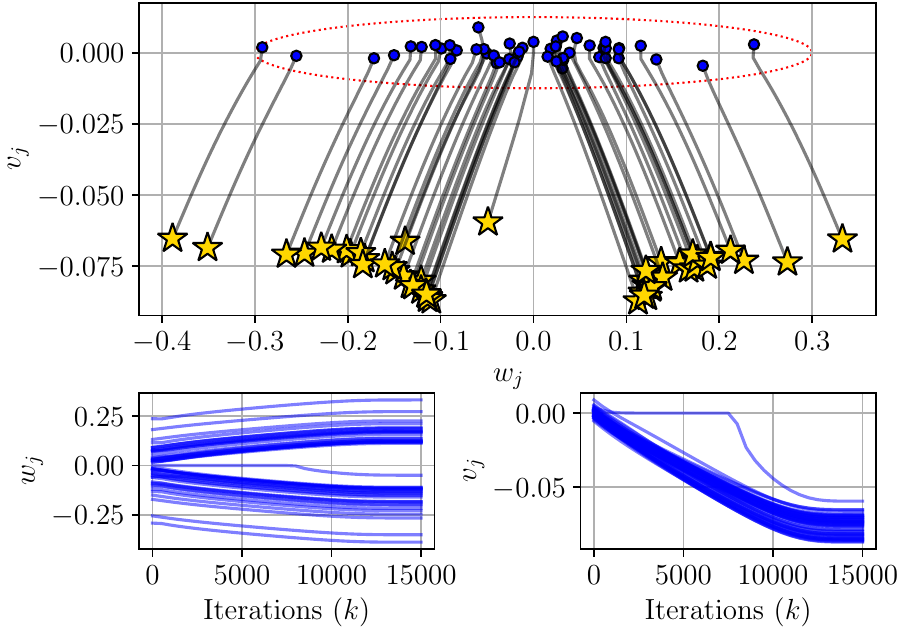}
    \vspace{-0.6cm}
    \caption{Movement of 50 randomly selected neurons in the $(w_j,v_j)$ plane, stars mark the final positions. The red ellipse corresponds to the Gaussian initializations.}
    \vspace{-0.65cm}
    \label{fig_neurons_movement}
\end{figure}

Finally, \figurename \ref{fig_neurons_movement} shows the trajectories of 50 randomly selected neurons in the $(w_j,v_j)$ plane in the base case, with stars marking the final positions. In this run, most sampled neurons remain on their initial side of the line $w=0$, which is qualitatively consistent with the sign-stability mechanism analyzed in \secname \ref{sec_nn_landscape}. The plot also suggests that the movement in $v_j$ is often visually larger than the movement in $w_j$, but this should be interpreted as an empirical observation for the displayed sample, not as a theorem. Likewise, the apparent clustering of neurons is merely descriptive; the figure alone does not establish the absence of dominant neurons.

\section{Conclusions and Future Work} \label{sec_conclusions}

This paper studied policy gradient for overparameterized ReLU controllers in the scalar discrete-time LQR problem. Under appropriate assumptions, we proved that, with high probability, the sequence of controllers induced during training converges to the optimal LQR controller. The main contribution is therefore a rigorous convergence guarantee for a nonlinearly parameterized neural policy in a classical control problem. Our analysis clarifies how the ReLU parameterization behaves at both the controller and weight levels, and the numerical experiments are consistent with the theory and illustrate the expected training behavior.

Several directions remain open. An important next step is a model-free extension in which policy gradients are estimated from trajectories generated by the current controller. It would also be valuable to extend the analysis to higher-dimensional systems and to include state and control constraints.


\bibliographystyle{IEEEtran}
\bibliography{bib/ref}

\end{document}